\newcommand{\R}{\mathbb{R}}
\newcommand{\la}{\left\langle}
\newcommand{\ra}{\right\rangle}
\newtheorem{thm}{Theorem}[section]
\newtheorem*{thm*}{Theorem}
\newtheorem{cor}[thm]{Corollary}
\newtheorem{lemma}[thm]{Lemma}
\newtheorem{prop}[thm]{Proposition}
\newtheorem*{prop*}{Proposition}
\numberwithin{equation}{section}
\newtheorem{main}{Theorem}
\newtheorem{quest}{Question}
\newtheorem{corM}[main]{Corollary}
\theoremstyle{definition}
\numberwithin{equation}{section}
\title{Scattering Boundary Rigidity in the Presence of a Magnetic Field}
\author{Pilar Herreros}
\address{Mathematisches Institut, University of M\"unster, 48149 M\"unster, Germany}
\email{p.herreros@uni-muenster.de}
\begin{document}

\begin{abstract}
It has been shown in \cite{DPSU} that, under some additional assumptions, two simple domains with the same scattering data are equivalent. We show that the simplicity of a region can be read from the metric in the boundary and the scattering data. This lets us extend the results in \cite{DPSU} to regions with the same scattering data, where only one is known apriori to be simple. We will then use this results to resolve a local version of a question by Robert Bryant. That is, we show that a surface of constant curvature can not be modified in a small region while keeping all the curves of some fixed constant geodesic curvatures closed.
\end{abstract}

\maketitle

\section{Introduction}

A magnetic field on a Riemannian manifold can be represented by a closed
$2-$form $\Omega$, or equivalently by the $(1,1)$ tensor $Y:TM\to TM$ defined
by $\Omega(\xi,\nu)=\langle Y(\xi), \nu\rangle$ for all $x\in M$ and $\xi,
\nu\in T_xM$. The trajectory of a charged particle in such a magnetic field is
then modeled by the equation $$\nabla_{\gamma'}\gamma'=Y(\gamma'),$$ we will
call such curves \emph{magnetic geodesics}. In contrast to regular (or \emph{straight}) geodesics,
magnetic geodesics are not reversible, and can't be rescaled, i.e. the
trajectory depends on the energy $|\gamma'|^2$.\\

Magnetic geodesics and the magnetic flow where first considered by V.I. Arnold
\cite{A61} and D.V. Anosov and Y.G. Sinai \cite{AS}. The existence of closed
magnetic geodesics, and the magnetic flow in general, has been widely studied
since then. Some of the approaches to this subject are, the Morse-Novikov
theory for variational functionals (e.g. \cite{No, NT, Ta}), Aubry-Mather's
theory (e.g. \cite{CMP}),  the theory of dynamical systems (e.g. \cite{Gr99,
Ni, PP97})  and  using methods from symplectic geometry (e.g. \cite{A86, Gi87,
Gi96}).\\

In the special case of surfaces, where the $2-$form has the form $\Omega=k(x)
dA$, a magnetic geodesic of energy $c$ has geodesic curvature $k_g=
k(x)/\sqrt{c}$. This relates magnetic geodesics with the problem of prescribing
geodesic curvature, in particular with the study of curves of constant geodesic
curvature. This relation was used by V.I. Arnold in \cite{A88}, and later by
many others (see e.g \cite{Le, Sch}), to study the existence of closed
curves with prescribed geodesic curvature.

It is clear that on surfaces of constant curvature the curves of large constant
geodesic curvature are circles, therefore closed. The study of these curves
goes back to Darboux, who in 1894 claimed (in a footnote in his book \cite{Da})
that the converse is true, that is, if all curves of constant (sufficiently
large) geodesic curvature are closed, then the surface has to be of constant
Gauss curvature.

The proof of this result depends strongly on the fact that curves of geodesic
curvature are closed for all large curvature, or equivalently low energy. This
raises the following question, brought to my attention by R. Bryant.

\begin{quest}\label{BryantQ}
Are surfaces of constant Gauss curvature the only surfaces for which all curves
of a fixed constant nonzero geodesic curvature are closed?

\end{quest}

For the case where the constant is $0$, this question corresponds to existence
of surfaces all of whose geodesics are closed. The first examples of such
surfaces where given by Zoll \cite{Zo} who, in 1903, constructed a surface of
revolution with this property.\\

Using the relation between geodesic curvature and magnetic geodesics we can
approach this question by studying magnetic geodesics on a surface, in the
presence of a constant magnetic field. A first step in this direction is to
determine if a surface of constant curvature can be locally changed keeping all
the curves of a fixed constant geodesic curvature closed. We show in section
\ref{Rigidity for Surfaces} that the metric can not be changed in a small
region without loosing this property. In fact, more generally, we show that a
Riemannian surface with a magnetic flow whose orbits are closed can't be
changed locally
without ``opening'' some of its orbits. \\

The easier way of changing the metric in a small region without opening the
orbits is to require that all magnetic geodesics that enter the region leave it
at the same place and in the same direction as before, to join the outside part
of the orbit. This is the \emph{magnetic scattering data} of the region; for
each point and inward direction on the boundary, it associates the exit point
and direction of the corresponding unit speed magnetic geodesic.\\

With this problem in mind, we can ask the following boundary rigidity question
for magnetic geodesics. In a Riemannian manifold with boundary, in the presence
of a magnetic field, is the metric determined by the metric on the boundary and
the magnetic scattering data?

In general this is not true, even for the geodesic case. For example, a round
sphere with a small disk removed has the same scattering data as a round $\R
P^2$ with a disk of the same size removed. One of the usual conditions to
obtain boundary rigidity is to assume that the region is \emph{simple}. In our
setting \emph{simple} means a compact region that is magnetically convex, and
where the magnetic exponential map has no conjugate points (see section
\ref{Simple metrics and boundary data}).

For simple domains, scattering rigidity for geodesics is equivalent to distance
boundary rigidity (see \cite{Cr04}) and it has been widely studied. It is known
to hold for simple subdomains of $\R^n$ \cite{Gro} or \cite{Cr91} , an open
round hemisphere \cite{Mi}, hyperbolic space \cite{BCG} \cite{Cr04}, and some
spaces of negative curvature \cite{Ot, Cr90} among others. For a discussion on
the subject, see \cite{Cr04}.\\

Recently N. Dairbekov, P. Paternain, P. Stefanov and G. Uhlmann proved (in
\cite{DPSU}) magnetic boundary rigidity between two simple manifolds in several
classes of metrics, including simple conformal metrics, simple analytic
metrics, and all $2-$dimensional simple metrics.

To be able to apply this results to local perturbations of existing metrics we
would like to be able to compare a simple domain with any other (not
necessarily simple) domain with the same boundary behavior. For this we prove
the following theorem. Thus, proving magnetic rigidity for the simple domains
considered in \cite{DPSU}.

\begin{main}
Magnetic simplicity can be read from the metric on the boundary and the
scattering data.
\end{main}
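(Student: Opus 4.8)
The plan is to show that the two conditions defining magnetic simplicity---magnetic convexity of the boundary and the absence of conjugate points of the magnetic exponential map---can each be detected from the metric on $\partial M$ together with the scattering map $\sigma$, which sends each inward unit vector $(x,\xi)$ to its exit data $\sigma(x,\xi)$. It suffices to produce, for each condition, an equivalent statement phrased only in terms of the boundary metric and $\sigma$; then a region is simple precisely when both statements hold, so simplicity is read off from the boundary data. Throughout I write $\nu$ for the inward unit normal and $\Pi$ for the second fundamental form of $\partial M$.

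First I would treat convexity. The obstruction is that magnetic convexity is governed by the normal component $\langle \nabla_v v,\nu\rangle + \langle Y(v),\nu\rangle$ of the acceleration of the boundary-tangent magnetic geodesics, and this quantity is \emph{extrinsic}: neither $\Pi$ nor the normal part of $Y$ is determined by the intrinsic metric of $\partial M$. The idea is to recover it from the near-grazing asymptotics of $\sigma$. For an inward vector making small angle $\theta$ with $\partial M$, the corresponding magnetic geodesic is a short arc confined to a tubular neighborhood of the boundary, and its entry-to-exit displacement, together with the turning of the exit direction, expands in $\theta$ with leading coefficients given exactly by this magnetic normal curvature. Thus strict magnetic convexity is equivalent to the statement that $\sigma$ extends continuously to the boundary-tangent vectors by the identity, with the leading grazing expansion carrying the correct sign at every $(x,v)$---a condition on $\sigma$ and the boundary metric alone.

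Next I would treat the absence of conjugate points, together with the implicit requirement that no magnetic geodesic be trapped. Trapping is the easier half: a trapped orbit is exactly an inward vector on which $\sigma$ fails to be defined, so completeness of the scattering data already excludes it. For conjugate points I would linearize the flow and express the differential $d\sigma$ through magnetic Jacobi fields: a conjugate pair along an interior segment forces a degeneracy of the associated index form, hence a rank drop or orientation reversal of $d\sigma$ along the corresponding curve in the inward bundle. Conversely, once the boundary is magnetically convex and no orbit is trapped, every magnetic geodesic is a compact segment between two boundary points, and everywhere-nondegeneracy of $d\sigma$ propagates back to nondegeneracy of the magnetic exponential map, i.e. no conjugate points. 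This characterizes the no-conjugate-point condition via the rank of $d\sigma$.

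The main obstacle I expect is precisely this last step: conjugate points are an interior phenomenon while $\sigma$ records only boundary behavior, and the non-reversibility of the magnetic flow removes the symmetric Jacobi-field arguments available in the purely geodesic setting. Care is needed to rule out a ``refocusing'' geometry in which an interior conjugate point is masked by a nondegenerate scattering differential; I would handle this by tracking the sign of the index form along the entire segment rather than only at its endpoints, and by comparing with a known simple region sharing the same boundary data, for which $d\sigma$ is everywhere an isomorphism. Combining the convexity criterion with the conjugate-point criterion then shows that magnetic simplicity is determined by the metric on $\partial M$ and the scattering data.
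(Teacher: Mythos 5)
The paper's precise form of this statement (Theorem \ref{simple implies simple}) is a comparison theorem: $M$ sits inside $M_1$ with metric and magnetic field agreeing with those of a simple domain $\widehat M$ \emph{outside} the region, and with the same scattering relation; the conclusion is that $M$ is simple. Measured against that, your proposal has a structural gap that would sink it even if every sketched step were filled in: simplicity demands that the magnetic exponential map $exp^\mu_x : (exp^\mu_x)^{-1}(M)\to M$ be a \emph{global} diffeomorphism for every $x\in M$, whereas ``strict convexity $+$ no conjugate points'' only yields a local diffeomorphism, and you never address injectivity or surjectivity. The paper spends roughly half of its proof on exactly this: surjectivity by an open--closed argument along paths in $M$ (strict convexity preventing interior tangencies), injectivity at boundary points by a covering-map, degree-one argument that crucially uses simplicity of $\widehat M$ (no magnetic geodesic returns from $x$ to $x$), and then injectivity at interior points. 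Related to this, your disposal of trapping is false as stated: a magnetic geodesic trapped in \emph{both} time directions never meets $\partial M$ at all, so it is not ``an inward vector on which $\sigma$ fails to be defined''---it is simply invisible to the scattering data. The paper rules out trapped orbits by a separate argument: connect an arbitrary geodesic through a family of magnetic geodesics to one that exits, and show that meeting $\partial M$ (necessarily transversally, by convexity) is both an open and a closed condition along the family.

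On the two conditions you do treat: for convexity, in the paper's formulation there is nothing to prove, since magnetic convexity at $x\in\partial M$ concerns tangent magnetic geodesics in an exterior neighborhood, where metric and magnetic field coincide with those of $\widehat M$; $M$ inherits strict convexity verbatim. Your grazing-asymptotics criterion attacks a harder, purely intrinsic problem, and it is not established: without convexity already in hand, a near-tangent magnetic geodesic need not stay in a collar of $\partial M$, so the claimed short-arc expansion of $\sigma$ in the grazing angle has no a priori justification, and the argument threatens to be circular. For conjugate points your instinct does match the paper: interior conjugate points must first be pushed to the boundary---this is Lemma \ref{conj on boundary}, proved with the index-form machinery of Sections 2--3 (positive definiteness in the absence of conjugate points, a negative direction past a conjugate point, and continuity of positive-definiteness along a family of geodesics sweeping from a short near-boundary geodesic to the given one)---and boundary conjugate pairs are then identified with degeneracies of the derivative of the exit-point map $s\mapsto \texttt{s}(\xi(s))$ modulo the $\gamma'$ direction, which is scattering data and hence impossible because $\widehat M$ is simple. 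Your sketch names the right obstruction (refocusing) and the right tools (index form, comparison with the simple region), but the actual content---the push-to-the-boundary lemma and the identification of boundary conjugacy with degeneracy of $d\texttt{s}$---is precisely what remains unproved in your proposal.
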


To change the metric in a small region without opening the orbits, it is not
necessary to preserve the scattering data. It could be the case, in principle,
that orbits exit the region in a different place, but after some time, came
back to the region and leave it in the proper place to close up again (see
figure \ref{double orbit}). In section \ref{Rigidity for Surfaces} we look at this case in
$2$ dimensions, and we show the following theorem.

\begin{figure}
\includegraphics{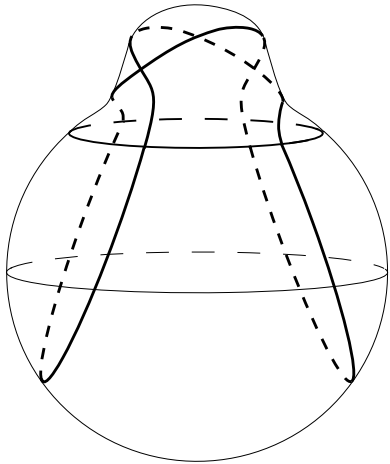}
\caption{A closed orbit going through $R$ twice.} \label{double orbit}
\end{figure}

\begin{main}\label{closed orbits implies scattering}
Let $M$ and $\widehat{M}$ be compact surfaces with magnetic fields, all of
whose magnetic geodesics are closed. Let $R\subset \widehat{M}$ be a strictly
magnetically convex region, such that every magnetic geodesic passes through
$R$ at most once.

If the metric and magnetic fields of $M$ and $\widehat{M}$ agree outside $R$,
then they have the same scattering data.\\
\end{main}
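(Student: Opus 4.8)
The plan is to reduce the equality of scattering data to a statement about a single return map on the inward boundary vectors, and then to use the hypothesis that all orbits are closed to pin that map down. Write $W:=M\setminus\mathrm{int}\,R=\widehat{M}\setminus\mathrm{int}\,R$ for the common exterior, and let $\partial_+$ (resp. $\partial_-$) denote the inward- (resp. outward-) pointing unit vectors along $\partial R$. The scattering data of $R$ in $M$ and in $\widehat{M}$ are the maps $S,\widehat{S}\colon\partial_+\to\partial_-$ sending an entering vector to the exit vector of the magnetic geodesic arc it determines inside $R$; strict magnetic convexity guarantees that every such arc meets $\partial R$ transversally, so $S$ and $\widehat{S}$ are diffeomorphisms that agree (with each other, and with the boundary correspondence) on the glancing vectors $\partial(\partial_+)$. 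Since $M$ and $\widehat{M}$ carry the same metric and magnetic field on $W$, they induce the same \emph{exterior return map} $E\colon\partial_-\to\partial_+$, obtained by flowing an exit vector through $W$ until it first meets $\partial R$ again.

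First I would record what the hypotheses give on the $\widehat{M}$ side. Because every magnetic geodesic of $\widehat{M}$ meets $R$ at most once and all are closed, a geodesic that enters $R$ consists of exactly one arc in $R$ followed by one arc in $W$ that closes it up; this says precisely that $E\circ\widehat{S}=\mathrm{id}$, i.e. $\widehat{S}=E^{-1}$. Next I would transport the exterior information to $M$ by a \emph{backward-flow} argument — note that the magnetic flow is not reversible, so one flows backward in time rather than reversing the geodesic. Given $(p,v)\in\partial_+$, the backward $M$-orbit leaves $(p,v)$ into $W$, and while it stays in $W$ it obeys the same equation as in $\widehat{M}$; hence it is the same curve and first returns to $\partial R$ exactly where the exterior arc of the closed $\widehat{M}$-orbit does, namely at $\widehat{S}(p,v)$. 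Consequently every maximal arc of an $M$-orbit inside $W$ is of the form $a\mapsto E(a)$ and every maximal arc inside $R$ is of the form $b\mapsto S(b)$, so the first-return map of the $M$-flow to $\partial_+$ is exactly $P:=E\circ S$.

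With this in place the theorem is \emph{equivalent} to the identity $P=\mathrm{id}$: if $S=\widehat{S}$ then $P=E\circ\widehat{S}=\mathrm{id}$, while conversely $P=\mathrm{id}$ gives $E(S(p,v))=(p,v)=E(\widehat{S}(p,v))$, and injectivity of $E$ forces $S=\widehat{S}$. The remaining point is therefore to exclude the multiple-pass orbits of Figure~\ref{double orbit}, i.e. to show that no $M$-orbit meets $R$ more than once. Here I would exploit that \emph{all} magnetic geodesics of $M$ are closed: this makes every point of $\partial_+$ periodic under $P$. Moreover $P=E\circ S$ preserves the natural (Liouville) boundary form, since both the scattering relation and the exterior flow do, and by the glancing normalization together with $E\circ\widehat{S}=\mathrm{id}$ it fixes each boundary circle of the annulus $\partial_+$ pointwise.

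The main obstacle is thus the purely dynamical assertion that an area-preserving diffeomorphism of the annulus $\partial_+$, fixing both boundary circles and all of whose orbits are periodic, must be the identity. I expect to obtain this from the twist character of scattering in a strictly convex region: near the glancing circles strict magnetic convexity makes $S$, and hence $P$, a monotone twist, so its rotation function is monotone and vanishes on the two boundary circles, and an everywhere-rational monotone rotation function that is zero at both ends must vanish identically, forcing $P=\mathrm{id}$. Equivalently, a single double-pass orbit would swap the exit vectors $S(p,v)=\widehat{S}(p_0,v_0)$ and $S(p_0,v_0)=\widehat{S}(p,v)$ of two entering vectors, producing a $P$-orbit incompatible with area preservation and the boundary normalization. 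Carrying out this step rigorously — reconciling the twist/rotation-number bookkeeping with the possible failure of $P$ to be a \emph{global} twist — is where the real work lies; once it is settled one has $P=\mathrm{id}$, hence $S=\widehat{S}$, which is the asserted equality of scattering data.
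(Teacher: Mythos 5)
Your setup and reduction are correct, and in fact they mirror the paper's endgame: once one knows every $M$-orbit meets $R$ at most once, closedness forces $E\circ S=\mathrm{id}=E\circ\widehat{S}$, and injectivity of $E$ gives $S=\widehat{S}$. The genuine gap is exactly the step you flag as ``where the real work lies,'' and the sketch you offer for it does not survive scrutiny. Since $E=\widehat{S}^{-1}$, your return map is $P=\widehat{S}^{-1}\circ S$. Now $g=\hat g$ and $\Omega=\widehat{\Omega}$ on an open set whose closure contains $\partial R$, so all their derivatives agree along $\partial R$; the leading (and in fact every) twist coefficient of a scattering map at glancing is determined by this boundary jet, so the twists of $S$ and of $\widehat{S}^{-1}$ cancel in $P$, which is therefore \emph{tangent to the identity} at the glancing circles rather than a monotone twist. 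The claim is also self-defeating: the map you are trying to prove equal to the identity cannot be a monotone twist, and a monotone twist map is never pointwise periodic (it has orbits of irrational rotation number), so the twist hypothesis contradicts the periodicity you intend to combine it with. Finally, for a general area-preserving annulus diffeomorphism there is no ``rotation function'' at all --- rotation numbers live on invariant circles, which $P$ has no reason to possess --- and the alternative remark that a double-pass orbit is ``incompatible with area preservation'' is unsubstantiated: configurations as in figure \ref{double orbit} are not by themselves area-defying, which is precisely the difficulty of the theorem.

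The dynamical statement you need is true, but it is a nontrivial theorem, not a computation: a pointwise periodic homeomorphism of a connected manifold is periodic (Montgomery), and a finite-order homeomorphism of the annulus fixing a boundary circle pointwise is the identity (by the Ker\'ekj\'art\'o--Eilenberg classification of periodic annulus homeomorphisms; area preservation is then not even needed). Such a structure theorem is exactly what the paper imports instead, in the form of Epstein's theorem \cite{Ep}: closedness of all orbits makes the orbit foliation of $SM$ a Seifert fibration, and the paper then runs a counting argument on the orbit space --- a function $m$ on the $2$-orbifold of orbits, locally constant off the curve $R_0$ of tangent orbits, equal to $1$ on $R_0$, with a separate analysis at singular fibers --- to conclude that every $M$-geodesic crosses $R$ at most once. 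So your outline is completable, and would give a genuinely different (cross-section rather than orbit-space) organization of the proof, but only after replacing the twist/rotation-number step by a citation or proof of a Montgomery--Ker\'ekj\'art\'o type result; as written, the argument is missing its core.
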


Applying these two theorems for a constant magnetic field on a surface of
constant curvature, we conclude that:

\begin{corM}\label{rigidity for constant curvature}
Given a surface of constant curvature and a fixed $k$ such that all the circles of geodesic curvature $k$ are simple. The surface can not be perturbed in a small enough region while keeping all the curves of geodesic curvature $k$ closed. \\
\end{corM}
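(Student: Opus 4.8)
The plan is to recast the statement in terms of magnetic geodesics and then chain together the two main theorems with the two-dimensional rigidity result of \cite{DPSU}. On the surface $M$ of constant curvature, normalize to unit speed and equip $M$ with the magnetic field $\Omega = k\,dA$, where $dA$ is the area form; by the curvature--energy relation recalled in the introduction, the unit-speed magnetic geodesics are exactly the curves of geodesic curvature $k$, which by hypothesis are simple and closed. A perturbation supported in a small region $R$ produces a surface $\widehat{M}$ that agrees with $M$ outside $R$; equipping it with $\widehat\Omega = k\,\widehat{dA}$, its unit-speed magnetic geodesics are again the curves of geodesic curvature $k$, closed by assumption. Since the metrics agree outside $R$ so do the area forms, hence the magnetic fields agree outside $R$ as well. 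Thus both $M$ and $\widehat{M}$ have all their magnetic geodesics closed and coincide, metric and field, off $R$.

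I would then check that $R$ can be taken strictly magnetically convex and so small that every magnetic geodesic meets $R$ at most once. This is where the simplicity of the circles of geodesic curvature $k$ is used: as embedded closed curves of a single fixed radius, no such circle can enter a sufficiently small region in two disjoint arcs, and since $\widehat{M}\setminus R = M\setminus R$ this once-passage property is inherited by $\widehat{M}$. With these hypotheses in place, Theorem \ref{closed orbits implies scattering} applies and yields that $M$ and $\widehat{M}$ have the same scattering data on $R$.

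Finally I would promote this to a genuine rigidity statement. Viewed inside the constant-curvature surface $M$, a small convex $R$ is magnetically simple: magnetic convexity holds for small regions, and conjugate points along magnetic geodesics only appear at a definite distance, so a small enough $R$ is free of them. By our first main theorem, magnetic simplicity is determined by the boundary metric and the scattering data; since these agree for the two copies of $R$, the region $R\subset\widehat{M}$ is magnetically simple as well. Both copies of $R$ are then simple two-dimensional domains with the same boundary metric and scattering data, so the magnetic boundary rigidity of \cite{DPSU} forces them to be equivalent by a boundary-fixing isometry that preserves the magnetic field. Combined with the agreement outside $R$, this shows $\widehat{M}$ is isometric to $M$, i.e. the perturbation was trivial.

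The main obstacle is the geometric bookkeeping concentrated in the middle step: verifying simultaneously that a single scale of $R$ can be chosen to be strictly magnetically convex, to be met at most once by every magnetic geodesic, and to be magnetically simple when regarded inside $M$. Each of these is an open, small-scale condition on a constant-curvature surface carrying a constant magnetic field, so I expect them to hold for any $R$ contained in a sufficiently small ball; making the dependence of the admissible radius on $k$ and the curvature explicit, and confirming that the once-passage property genuinely survives the perturbation, is the delicate part.
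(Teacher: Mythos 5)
Your outline reproduces the paper's own proof: realize the curves of geodesic curvature $k$ as the unit-speed magnetic geodesics of the constant magnetic field $k\,dA$, enclose the perturbed region in a small disk that is strictly magnetically convex, simple, and met at most once by every such circle, then chain Theorem \ref{closed orbits implies scattering} (equal scattering data), Theorem \ref{simple implies simple} (the perturbed disk is then also simple), and \cite[Theorem 7.1]{DPSU} (gauge equivalence of two-dimensional simple systems with the same scattering data). However, there is one genuine gap. Theorem \ref{closed orbits implies scattering} is a statement about \emph{compact} surfaces, and its proof is essentially tied to compactness: it invokes Epstein's theorem \cite{Ep} to turn the circle foliation of the unit tangent bundle into a Seifert fibration, which requires $SM$ to be a compact $3$-manifold. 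A surface of constant curvature in the corollary need not be compact --- the Euclidean and hyperbolic planes are the principal cases behind Bryant's question --- so your appeal to that theorem is not licensed as written. The paper closes exactly this hole with a step your proposal lacks: when $M$ is not compact, it replaces $M$ (and its perturbation) by a compact quotient containing every magnetic geodesic that meets the disk $D$, which is possible because all circles of curvature $k$ through $D$ stay in a bounded neighborhood of $D$; the whole argument is then run on that compact quotient.

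A secondary flaw, harmless to the architecture but wrong as stated: you justify the once-passage property on the perturbed surface by saying it is ``inherited'' since the two surfaces agree outside $R$. That inference is circular. A perturbed magnetic geodesic could a priori leave $R$, follow an unperturbed arc outside, and re-enter $R$ before closing up --- this ``double orbit'' scenario is precisely what the proof of Theorem \ref{closed orbits implies scattering} must exclude via the Seifert fibration argument, and it cannot be dismissed merely because the data agree outside $R$. Fortunately the theorem only requires the at-most-once hypothesis on the unperturbed surface, where your embeddedness argument (or the paper's explicit quantitative condition that $R$ lie in a disk of radius $r<r_k$ with $r+r_k$ less than the injectivity radius) does establish it; so the inheritance claim should simply be deleted rather than relied upon.
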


\subsection*{Acknowledgments} I would like to thank my advisor Christopher Croke for all his support. \\


\section{Magnetic Jacobi fields and conjugate points}

Let $(M, g)$ be a compact Riemannian manifold with boundary, with a magnetic
field given by the closed $2$-form $\Omega$ on $M$. Denote by $\omega_0$ the
canonical symplectic form on $TM$, that is, the pull back of the canonical
symplectic form of $T^*M$ by the Riemmanian metric.  The geodesic flow can be
described as the Hamiltonian flow of $H$ w.r.t. $\omega_0$, where $H:TM\to \R$
is defined as
$$H(v)=\frac{1}{2}|v|^2_g,\  v\in TM.$$
In a similar way, the magnetic flow $\psi^t:TM\to TM$ can be described as the
Hamiltonian flow of $H$ with respect to the modified symplectic form
$\omega=\omega_0+\pi^*\Omega$. This flow has orbits $t\to
(\gamma(t),\gamma'(t))$, where $\gamma$ is a magnetic geodesic, i.e.
$\nabla_{\gamma'}\gamma'=Y(\gamma')$. Note that when $\Omega =0$ we recover the
geodesic flow, whose orbits are geodesics.\\

It follows from the above definitions that the magnetic geodesics have constant
speed. In fact,
$$\frac{d}{dt}\la\gamma'(t),\gamma'(t) \ra= 2 \la Y(\gamma'),\gamma'\ra= 2\Omega(\gamma',\gamma')=0.$$
Moreover, the trajectories of the magnetic geodesics depend on the energy
level. Unlike geodesics, a rescaling of a magnetic geodesic is not longer a
magnetic geodesic. We will restrict our attention to a single energy level, or
equivalently to unit speed magnetic geodesics. Therefore, from now on, we will
only consider the magnetic flow $\psi^t:SM\to SM$. \\

The choice of energy level is not a restriction, since we can study other
energy levels by considering the form $\tilde{\Omega}=\lambda\Omega$, for any
$\lambda\in \R$.  \\

For $x\in M$ we define the \emph{magnetic exponential map} at $x$ to be the
partial map $exp^\mu_x: T_xM\to M$ given by
$$exp^\mu_x(t\xi)=\pi\circ \psi^t(\xi), \ t\geq 0,\ \xi\in S_xM.$$
This map takes a vector $t\xi\in T_xM$ to the point in $M$ that corresponds to
following the magnetic geodesic with initial direction $\xi$, a time $t$. This
function is $C^\infty$ on $T_xM\setminus \{0\}$ but in general only $C^1$ at
$0$. The lack of smoothness at the origin can be explained by the fact that
magnetic geodesics are not reversible. When we pass through the origin we
change from $\gamma_\xi$ to $\gamma_{-\xi}$, that in general only agree up to
first order. For a proof see Appendix A in \cite{DPSU}.\\

We will say that a point $p\in M$ is \emph{conjugate} to $x$ along a magnetic
geodesic $\gamma$ if $p = \gamma(t_0)= exp^\mu_x(t_0\xi)$ and $v= t_0\xi$ is a
critical point of $exp^\mu_x$. The \emph{multiplicity} of the conjugate point
$p$ is then the dimension of the kernel of $d_v exp^\mu_x$.\\


In what follows, and throughout this paper, if $V$ is a vector field along a
geodesic $\gamma(t)$,  $V'$ will denote the covariant derivative
$\nabla_{\gamma'}V$.

We want to give an alternative characterizations of conjugate points. For this
consider a variation of $\gamma$ through magnetic geodesics. That is
$$f(t,s)=\gamma_{s}(t)$$
where $\gamma_{s}(t)$ is a magnetic geodesic for each $s\in (-\epsilon,
\epsilon)$ and $t\in[0,T]$. Therefore $\frac{D}{\partial t}\frac{\partial
f}{\partial t}= Y(\frac{\partial f}{\partial t})$. Using this and the
definition of the curvature tensor we can write:
$$\begin{array}{rl}
\frac{D}{\partial s}( Y(\frac{\partial f}{\partial t}))
&= \frac{D}{\partial s}\frac{D}{\partial t}\frac{\partial f}{\partial t}
\ =\ \frac{D}{\partial t}\frac{D}{\partial s}\frac{\partial f}{\partial t} -
R(\frac{\partial f}{\partial s},\frac{\partial f}{\partial t})\frac{\partial
f}{\partial t}
\\&=\frac{D}{\partial t}\frac{D}{\partial t}\frac{\partial f}{\partial s}
+ R(\frac{\partial f}{\partial t},\frac{\partial f}{\partial s})\frac{\partial
f}{\partial t}
\end{array}$$

If we call the variational field $J(t)=\frac{\partial f}{\partial s}(t,0)$ we
get for $s=0$
$$\nabla_J(Y(\gamma'))= J'' +R(\gamma',J)\gamma'.$$
 Note also that
$$\nabla_J(Y(\gamma')) = Y(\nabla_J\gamma') + (\nabla_JY)(\gamma')$$
and
\begin{equation}\label{J'nabla}
\nabla_J\gamma'(t)= \frac{D}{\partial s}\frac{\partial f}{\partial
t}(t,0)=\frac{D}{\partial t}\frac{\partial f}{\partial s}(t,0)= J'(t)
\end{equation}
so we can rewrite the above equation as
 $$J''+R(\gamma',J)\gamma'-Y(J')-(\nabla_JY)(\gamma')=0.$$

Since magnetic geodesics can't be rescaled we have $|\gamma_{s}'|=1$ for all
the magnetic geodesics in the variation.  This equation together with equation
(\ref{J'nabla}) gives, for any such variational field $J$,
$$\la J',\gamma'\ra = \la \nabla_J\gamma',\gamma' \ra =
J\la\gamma',\gamma'\ra=0.$$

This equations characterize the variational field of variations through
magnetic geodesics, in a way analogous to the characterization of Jacobi
fields. We will use this equation as a definition as follows. \\

Given a magnetic geodesic $\gamma : [0, T ]\to M$, let $\mathcal{A}$ and
$\mathcal{C}$ be the operators on smooth vector fields $Z$ along $\gamma$
defined by
$$\mathcal{A}(Z) = Z''+R(\gamma',Z)\gamma'-Y(Z')-(\nabla_ZY)(\gamma'),$$
$$\mathcal{C}(Z)=R(\gamma',Z)\gamma'-Y(Z')-(\nabla_ZY)(\gamma').$$

A vector field $J$ along $\gamma$ is said to be a \emph{magnetic Jacobi field}
if it satisfies the equations
\begin{equation}\label{A=0}\mathcal{A}(J) = 0
\end{equation}
and
\begin{equation}\label{J'gamma=0}
\la J',\gamma'\ra =  0.\\
\end{equation}

Note that from equation \ref{A=0} we can see that
$$\begin{array}{rl}
\frac{d}{dt}\la J',\gamma'\ra&= \la J'',\gamma' \ra + \la J',Y(\gamma') \ra
\\&= \la -R(\gamma',J)\gamma'+Y(J')+(\nabla_JY)(\gamma'), \gamma' \ra - \la Y(J'), \gamma' \ra =0\end{array}$$
where we used that $\la(\nabla_JY)(\gamma'), \gamma' \ra =0$ because $Y$ is
skew-symmetric. Therefore, it is enough to check condition \ref{J'gamma=0} at a
point.

Note: In \cite{DPSU} magnetic Jacobi fields were defined without condition \ref{J'gamma=0}, although where used only for vector fields that satisfied this condition. We will add it to the definition since it helps preserve the relation between (magnetic) Jacobi fields and variations through (magnetic) geodesics. \\

A magnetic Jacobi field along a magnetic geodesic $\gamma$ is uniquely
determined by its initial conditions $J(0)$ and $J'(0)$. To see this, consider
the orthonormal basis defined by extending an orthonormal basis $e_1,\dots,e_n$
at $\gamma(0)$ by requiring that
\begin{equation}\label{ON extension}
 e_i'=Y(e_i)
\end{equation}
along $\gamma$. This extension gives an orthonormal basis at each point since
$$ \frac{d}{dt}\la e_i,e_j \ra=\la Y(e_i),e_j \ra+ \la e_i,Y(e_j) \ra =0.$$
Using this basis,
$$z=\sum_{i=1}^n f_ie_i$$
and we can write equation \ref{A=0} as the system
$$f_j'' + \sum_{i=1}^n f_i'y_{ij} + \sum_{i=1}^n f_i a_{ij}=0$$
where $y_{ij}=\la Y(e_i),e_j\ra$ and
$$a_{ij}= \la \nabla_{\gamma'}Y(e_i)+ R(\gamma',e_i)\gamma'-Y(Y(e_i))-(\nabla_{e_i}Y)(\gamma') ,e_j\ra.$$
This is a linear second order system, and therefore it has a unique solution
for each set of initial conditions.\\

Magnetic Jacobi fields correspond exactly to variational field of variations
through magnetic geodesics. In the case of magnetic Jacobi fields $J$ along
$\gamma_{\xi}$ that vanish at $0$ this can be seen by considering
 \begin{equation}\label{variation}
f(t,s)= \gamma_s(t)= exp^\mu_x(t\xi(s))
\end{equation}
where $\xi:(-\epsilon,\epsilon)\to S_xM$ is a curve with $\xi(0)=\xi=
\gamma_{\xi}$ and $\xi'(0)=J'(0)$. This is clearly a variation through magnetic
geodesics, and therefore its variational field $\frac{\partial f}{\partial
s}(t,0)$ satisfies \ref{A=0}. The variational field $\frac{\partial f}{\partial
s}(t,0)$ and the magnetic Jacobi field $J(t)$ are then solutions of \ref{A=0}
with the same initial conditions, therefore they must agree.

For magnetic Jacobi fields $J$ that do not vanish at $0$, we can use the
variation
$$f(t,s)= \gamma_s(t)= exp^\mu_{\tau(s)}(t\xi(s))$$
where $\tau(s)$ is any curve with $\tau'(0)=J(0)$ and $\xi(s)$ is a vector
field along $\tau$ with $\xi(0)=\gamma'(0)$ and $\xi'(0)=J'(0)$.\\

It is easy to see from  the definition and the equation $\gamma''=Y(\gamma')$
that $\gamma'$ is always a magnetic Jacobi field. Unlike the case of straight
geodesics, this is the only magnetic Jacobi field parallel to $\gamma'$.
Another difference from the straight geodesic case is that magnetic Jacobi
fields that are perpendicular to $\gamma'$ at $t=0$ don't stay perpendicular
for all $t$. For this reason we will sometimes consider instead the orthogonal
projection $J^\perp =J-f\gamma'$ where $f=\la J,\gamma'\ra$. The component
$f\gamma'$ of $J$ parallel to $\gamma'$ is uniquely determined by $J^\perp$ and
$J(0)$, since
$$f'= \la J',\gamma'\ra+\la J,\gamma''\ra= \la J, Y(\gamma') \ra= \la J^\perp, Y(\gamma')
\ra.$$\\

We will need one more property of magnetic Jacobi fields that vanish at $0$.
Let $\gamma:[0,T]\to M$ be a magnetic geodesic with $\gamma(0)=x$. Let $v\in
T_{\gamma'}S_xM$, or equivalently under the usual identification, $v\in T_xM$
perpendicular to $\gamma'$.

Let $f(t,s)$ be a  variation through magnetic geodesics of the form
\ref{variation} where $t\in[0,T]$ and $\xi:(-\epsilon, \epsilon)\to S_xM$ with
$\xi(0)=\gamma'(0)$ and $\xi'(0)=v$. The variational field $J_v$ of this
variation is
\begin{equation}\begin{array}{rl}\label{Jv}
J_v(t)&=\frac{\partial f}{\partial s}(t,0)=\left.\frac{\partial }{\partial s}[\pi\circ
\psi^t(\xi(s))]\right|_{s=0}=d_{\xi(0)}[\pi\circ \psi^t](\xi'(0)) \vspace*{2mm}
\\&=d_{\psi^t(\xi)}\pi\circ d_{\xi}\psi^t(v)=d_{\gamma'(t)}\pi\circ d_{\gamma'(0)}\psi^t(v)\end{array}\end{equation}
and its derivative is given by
$$\begin{array}{rl}J_v'(t)&=\frac{D}{\partial t}\frac{\partial f}{\partial s}(t,0)
=\left.\frac{D}{\partial s}\frac{\partial }{\partial t}[\pi\circ
\psi^t(\xi(s))]\right|_{s=0} \vspace*{2mm}\\&= \left.\frac{D}{\partial
s}[\psi^t(\xi(s))]\right|_{s=0}=d_{\xi(0)}\psi^t(\xi'(0))=d_{\gamma'(0)}\psi^t(v).\end{array}
$$
This equations are independent of the variation $f$.\\

Since the magnetic flow is a Hamiltonian flow with respect to the symplectic
form $\omega=\omega_0+\pi^*\Omega$, this form is invariant under the magnetic
flow $\psi^t$ \cite[pg. 10]{Pa}. Therefore for any two magnetic Jacobi fields
$J_v$ and $J_w$ as above, we have that
$$\begin{array}{rl}\omega(d_{\gamma'}\psi^t(v),d_{\gamma'}\psi^t(w)
)&=\omega_0(d_{\gamma'}\psi^t(v),d_{\gamma'}\psi^t(w))+\pi^*\Omega(d_{\gamma'}\psi^t(v),d_{\gamma'}\psi^t(w))\vspace*{2mm} \\&= \la J_v(t),J_w'(t)\ra -\la J_v'(t),J_w(t)\ra +\Omega(J_v(t),J_w(t))\end{array}$$
is independent of $t$. Using also that $J_v(0)=0$, we get
\begin{equation}\label{wJJ=0}
\la J_v,J_w' \ra - \la J_v',J_w \ra + \la Y(J_v),J_w \ra=0
\end{equation}
for any two such Jacobi fields. \\

We will now relate the concepts of magnetic Jacobi fields and conjugate points.

\begin{prop}
Let $\gamma_\xi:[0,T]\to M$ be the magnetic geodesic with $\gamma(0)=x$ and
$\gamma'(0)=\xi$. The point $p=\gamma(t_0)$ is conjugate to $x$ along $\gamma$
if and only if there exist a magnetic Jacobi field $J$ along $\gamma$, not
identically zero, with $J(0)=0$ and $J(t_0)$ parallel to $\gamma'$.

Moreover, the multiplicity of $p$ as a conjugate point is equal to the number
of linearly independent such Jacobi fields.
\end{prop}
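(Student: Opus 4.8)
The plan is to reduce the proposition to a direct computation of the differential $d_v exp^\mu_x$ at $v=t_0\xi$ and to recognize it through the magnetic Jacobi fields $J_u$ of (\ref{Jv}). As $t_0>0$, the vector $v$ lies away from the origin, where $exp^\mu_x$ fails to be smooth, so $d_v exp^\mu_x:T_xM\to T_pM$ is a genuine linear map; by definition $p=\gamma(t_0)$ is conjugate to $x$ exactly when this map is singular, with multiplicity $\dim\ker d_v exp^\mu_x$.

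First I would compute $d_v exp^\mu_x$ using the splitting $T_v(T_xM)\cong T_xM=\R\xi\oplus\xi^\perp$, writing a tangent vector as $a\xi+t_0u$ with $a\in\R$ and $u\perp\xi$. Along the radial curve $s\mapsto(t_0+s)\xi$ the exponential traces $\gamma(t_0+s)$, so $d_v exp^\mu_x(\xi)=\gamma'(t_0)$. In a perpendicular direction I would use the variation (\ref{variation}) with $\xi'(0)=u\perp\xi$: the curve $s\mapsto t_0\xi(s)$ has velocity $t_0u$, and the $s$-derivative at $s=0$ of its image $exp^\mu_x(t_0\xi(s))$ is, by (\ref{Jv}), exactly $J_u(t_0)$, where $J_u$ is the magnetic Jacobi field with $J_u(0)=0$ and $J_u'(0)=u$. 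Hence
\[
d_v exp^\mu_x(a\xi+t_0u)=a\,\gamma'(t_0)+J_u(t_0),\qquad a\in\R,\ u\perp\xi.
\]

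Next I would read off the kernel. By (\ref{J'gamma=0}) at $t=0$ a magnetic Jacobi field with $J(0)=0$ has $J'(0)\perp\xi$, so $u\mapsto J_u$ is a linear isomorphism from $\xi^\perp$ onto the space of magnetic Jacobi fields vanishing at $0$. From the displayed formula, $a\xi+t_0u\in\ker d_v exp^\mu_x$ forces $J_u(t_0)=-a\,\gamma'(t_0)$; if $u=0$ then $J_u\equiv0$ and $a=0$ since $\gamma'(t_0)\neq0$, so a nontrivial kernel element has $u\neq0$ with $J_u(t_0)$ parallel to $\gamma'(t_0)$. Conversely, any $u\neq0$ with $J_u(t_0)=\mu\,\gamma'(t_0)$ yields the nonzero kernel element $-\mu\xi+t_0u$. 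This is precisely the claimed equivalence: $p$ is conjugate to $x$ if and only if there is a nontrivial magnetic Jacobi field $J$ with $J(0)=0$ and $J(t_0)$ parallel to $\gamma'$.

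For the multiplicity I would verify that $a\xi+t_0u\mapsto J_u$ is a well-defined linear isomorphism from $\ker d_v exp^\mu_x$ onto $\{\,J:J(0)=0,\ J(t_0)\parallel\gamma'(t_0)\,\}$: it is onto by the converse just given, and one-to-one because $J_u$ recovers $u=J_u'(0)$ and then $a$ is determined by $J_u(t_0)=-a\,\gamma'(t_0)$; equality of dimensions follows. The one genuinely magnetic point, which I expect to be the only real subtlety, is that the correct condition is $J(t_0)\parallel\gamma'(t_0)$ rather than $J(t_0)=0$: since magnetic Jacobi fields need not remain perpendicular to $\gamma'$, the radial term $a\,\gamma'(t_0)$ can absorb the $\gamma'(t_0)$-component of $J_u(t_0)$, so $d_v exp^\mu_x$ degenerates already when $J_u(t_0)$ is merely proportional to $\gamma'(t_0)$.
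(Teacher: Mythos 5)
Your proof is correct and follows essentially the same route as the paper: both identify $d_{t_0\xi}\,exp^\mu_x$ on radial and perpendicular directions with $\gamma'(t_0)$ and the Jacobi fields $J_u(t_0)$ of (\ref{Jv}), and both read off conjugacy from the decomposition $v=\la v,\xi\ra\xi+v^\perp$. Your explicit isomorphism between $\ker d_{t_0\xi}\,exp^\mu_x$ and the space of magnetic Jacobi fields with $J(0)=0$, $J(t_0)\parallel\gamma'(t_0)$ is in fact a slightly more complete justification of the multiplicity statement than the paper's brief remark, but the underlying argument is the same.
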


Consider a variation through magnetic geodesics as in (\ref{variation}), with
$\xi'(0)=v$ perpendicular to $\gamma'$. Then
$$J_v(t)=\frac{\partial f}{\partial s}(t,0)=d_{t\xi}exp^\mu_x(tv)$$
is a nontrivial magnetic Jacobi field. If there is a vector $v$ for which
$J_v(t_0)$ is parallel to $\gamma'$, then $d_{t_0\xi} exp^\mu_x(t_0v)$ and
$d_{t_0\xi}exp^\mu_x(\xi)=\gamma'$ will be parallel, and $t_0\xi$ is a critical
point of $exp^\mu_x$. Conversely, if $t_0\xi$ is a critical point there must be
a vector $v$  such that $d_{t_0\xi}exp^\mu_x(v)=0$. Let $v^\perp = v - \la
v,\xi\ra\xi$, this is not $0$ since $d_{t_0\xi}exp^\mu_x(\xi)=\gamma'\neq 0$,
and
$$J_{v^\perp}(t_0)= d_{t_0\xi} exp^\mu_x(t_0v) - d_{t_0\xi} exp^\mu_x(t_0\la v,\xi\ra\xi) = - t_0\la
v,\xi\ra\gamma'(t_0).$$

To prove the second statement, note that Jacobi fields $J_{v_i}$ as above are
linearly independent iff the vectors $v_i$ are. Since all $v_i$ are
perpendicular to $\gamma'(0)$, the number of linearly independent vectors will
be the dimension of the kernel of $d_{t_0\xi}exp^\mu_x$, that is the
multiplicity of the conjugate point. \\


\section{The Index form}

Let $\Lambda$ denote the $\mathbb{R}$-vector space of piecewise smooth vector
fields $Z$ along $\gamma$. Define the quadratic form $Ind : \Lambda \to
\mathbb{R}$ by
$$Ind_\gamma(Z) = \int_0^T\{|Z'|^2-\la\mathcal{C}(Z),Z\ra-\la Y(\gamma'),Z\ra^2 \}dt.$$
Note that
$$Ind_\gamma(Z) = -\int_0^T\{\la\mathcal{A}(Z),Z\ra+\la
Y(\gamma'),Z\ra^2 \}dt+\left.\la Z,Z' \ra\right|_0^T +\sum \la Z,Z'^--Z'^+ \left.\ra\right|_{t_i}.$$
where $Z'^\pm$ stands for the left and right derivatives of
$Z$ at the points $t_i$ where the derivative is discontinuous. \\

The $Ind_\gamma(Z)$ generalizes the index form of a geodesic in a Riemannian
manifold. It is easy to see that when $\Omega=0$ these are the same form. We
will see throughout this section that, when restricted to orthogonal vector
fields, we retain some of the relations between (magnetic) Jacobi fields, index
form and conjugate points.\\

Let $\Lambda_0$ denote the $\mathbb{R}$-vector space of piecewise smooth vector
fields $Z$ along $\gamma$ such that $Z(0) = Z(T) = 0$, $\Lambda^\perp$ the
subspace of piecewise smooth vector fields that stay orthogonal to $\gamma'$,
and $\Lambda_0^\perp=\Lambda_0\cap\Lambda^\perp$.\\

For any magnetic Jacobi field $J$ along a magnetic geodesic $\gamma$, let
$f=\la J, \gamma'\ra$ and $J^\perp=J-f \gamma'$ the component of $J$ orthogonal to
$\gamma$ , using that $\gamma''=Y(\gamma')$  we have\\
$\mathcal{A}(J)= $
$${J^\perp}''+f''\gamma' +2f'Y(\gamma')+f\gamma'''+R(\gamma',J^\perp+f \gamma')\gamma'
-Y({J^\perp}'+f'\gamma'+f\gamma'')-(\nabla_{Z+f\gamma'}Y)(\gamma')$$
so\begin{equation}\label{AofJasZf} 0=A(J^\perp)+fA(\gamma')+f''\gamma'+f'Y(\gamma')
\end{equation}
On the other hand, any magnetic Jacobi field satisfies $\la J', \gamma'\ra=0$.
Using this together with $f=\la J, \gamma'\ra$ and $\gamma''=Y(\gamma')$ we see
that $f'= \la J, Y(\gamma')\ra=\la J^\perp, Y(\gamma')\ra$. Since $\gamma'$ is a
Jacobi field, $A(\gamma')=0$, and we have from (\ref{AofJasZf})
$$\la A(J^\perp), Z\ra =-f'\la Y(\gamma'),J^\perp\ra= -\la Y(\gamma'),J^\perp\ra^2$$
therefore if $J$ is a magnetic Jacobi field and its orthogonal component $J^\perp$ is
in $\Lambda_0$
\begin{equation}\label{Ind(J)=0}
Ind_\gamma(Z) = -\int_0^T\{\la\mathcal{A}(J^\perp),J^\perp\ra+\la Y(\gamma'),J^\perp\ra^2 \}dt
=0.
\end{equation}

Also, if there are no conjugate points along $\gamma$, we can restrict our
attention to a neighborhood of $\gamma$ for which the magnetic exponential is a
diffeomorphism. In this case we can adapt the proof given in \cite{DPSU} for
the case of simple domains to prove the following version of the Index Lemma.
We include the proof below.

\begin{lemma}\label{Index Lemma}
Let $\gamma$ be a magnetic geodesic without conjugate points to $\gamma(0)$.
Let $J$ be a magnetic Jacobi field, and $J^\perp$ the component orthogonal to
$\gamma'$. Let $Z\in \Lambda^\perp$ be a piecewise differentiable vector field
along $\gamma$, perpendicular to $\gamma'$. Suppose that
\begin{equation}\label{endpoints}
J^\perp(0)=Z(0)=0 \text{ and } J^\perp(T)=Z(T) \end{equation} Then
$$Ind_\gamma(J^\perp)\leq Ind_\gamma(Z)$$
and equality occurs if and only if $Z=J^\perp$.\\
\end{lemma}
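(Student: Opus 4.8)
The plan is to follow the classical Riemannian strategy for the Index Lemma, adapted to the magnetic setting using the bilinear form associated with $Ind_\gamma$. First I would expand the index form bilinearly. Since $Ind_\gamma$ is quadratic, define the associated symmetric bilinear form
$$I(X,W)=\int_0^T\{\la X',W'\ra -\la \mathcal{C}(X),W\ra -\la Y(\gamma'),X\ra\la Y(\gamma'),W\ra\}dt,$$
so that $Ind_\gamma(Z)=I(Z,Z)$. Writing $Z=J^\perp+(Z-J^\perp)$ and setting $D=Z-J^\perp$, bilinearity gives
$$Ind_\gamma(Z)=I(J^\perp,J^\perp)+2I(J^\perp,D)+I(D,D).$$
By the boundary hypotheses \eqref{endpoints}, $D(0)=D(T)=0$, so $D\in\Lambda_0^\perp$. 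The goal is then to show $I(J^\perp,J^\perp)=0$, that the cross term $I(J^\perp,D)$ vanishes, and that $I(D,D)\geq 0$ with equality iff $D=0$.

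The first fact, $I(J^\perp,J^\perp)=Ind_\gamma(J^\perp)=0$, is already established in \eqref{Ind(J)=0}, provided $J^\perp\in\Lambda_0$; here $J^\perp(0)=0$ by hypothesis and $J^\perp(T)=Z(T)$, so $J^\perp$ itself may not vanish at $T$, but the argument of \eqref{Ind(J)=0} only uses $\mathcal{A}(J)=0$ together with the integration-by-parts boundary terms, so I would rederive it keeping the endpoint contributions. For the cross term I would integrate by parts using the second displayed formula for $Ind_\gamma$: since $J^\perp$ comes from a genuine magnetic Jacobi field $J$, the relation \eqref{AofJasZf} shows $\mathcal{A}(J^\perp)=-f'Y(\gamma')$ along $\gamma$, where $f=\la J,\gamma'\ra$ and $f'=\la J^\perp,Y(\gamma')\ra$. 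Pairing against $D$, the curvature-and-$Y$ terms collapse exactly against the $\la Y(\gamma'),J^\perp\ra\la Y(\gamma'),D\ra$ term in $I$, while the boundary term $\la D,{J^\perp}'\ra\big|_0^T$ vanishes because $D(0)=D(T)=0$. This is the computation I expect to be the main obstacle, because the magnetic correction terms $Y(Z')$ and $(\nabla_ZY)(\gamma')$ in $\mathcal{C}$ are not self-adjoint in the naive sense, and one must check carefully that the extra $\la Y(\gamma'),X\ra\la Y(\gamma'),W\ra$ summand in $I$ is precisely what restores symmetry and kills the cross term; the skew-symmetry of $Y$ and the identity $f'=\la J^\perp,Y(\gamma')\ra$ are what make this work.

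It then remains to prove positivity: $I(D,D)\geq 0$ for $D\in\Lambda_0^\perp$, with equality iff $D=0$. This is where the no-conjugate-points hypothesis enters, via the diffeomorphism remark preceding the lemma. On a neighborhood where $exp^\mu_x$ is a diffeomorphism, the orthogonal magnetic Jacobi fields vanishing at $0$ furnish a basis of solutions, and I would write $D=\sum_i h_i J_i^\perp$ for piecewise smooth scalar functions $h_i$, where the $J_i^\perp$ are linearly independent orthogonal magnetic Jacobi fields with $J_i^\perp(0)=0$ and are non-degenerate on $(0,T]$ precisely because there are no conjugate points (Proposition~2.5). Substituting into $I(D,D)$ and integrating by parts, using \eqref{wJJ=0} to cancel the skew cross-terms $\la J_i^\perp,{J_j^\perp}'\ra-\la {J_i^\perp}',J_j^\perp\ra+\la Y(J_i^\perp),J_j^\perp\ra$ between distinct Jacobi fields, the integrand reduces to a perfect square $\big|\sum_i h_i'J_i^\perp\big|^2$. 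Hence $I(D,D)=\int_0^T\big|\sum_i h_i'J_i^\perp\big|^2\,dt\geq 0$, and equality forces each $h_i'\equiv 0$, so the $h_i$ are constant; the boundary condition $D(0)=0$ with the $J_i^\perp(0)=0$ already holds, while $D(T)=0$ together with the non-degeneracy of the $J_i^\perp$ at $T$ forces all $h_i=0$, i.e. $D=0$. Combining the three facts gives $Ind_\gamma(Z)=I(D,D)\geq 0=Ind_\gamma(J^\perp)$ with equality iff $D=0$, that is iff $Z=J^\perp$, completing the proof.
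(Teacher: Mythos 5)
Your strategy --- split $Z=J^\perp+D$ with $D\in\Lambda_0^\perp$, kill the cross term by bilinearity, and prove positivity on $\Lambda_0^\perp$ by expanding in a basis of magnetic Jacobi fields --- is genuinely different from the paper's proof, which never forms the difference field: the paper expands $Z$ itself as $\sum f_iJ_i$ and derives the identity $Ind_\gamma(Z)-Ind_\gamma(J^\perp)=\int_0^T\|W^\perp\|^2dt$ with $W=\sum_{i\geq 2}f_i'J_i$ directly (your positivity step for $I(D,D)$ is essentially this same computation, including the use of \eqref{wJJ=0}). Your route can be made to work, but as written it has a genuine gap at exactly the step you flagged as the main obstacle, and the ingredients you name do not close it.

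The gap: the form $I(X,W)$ you define is \emph{not} symmetric, because $\la\mathcal{C}(X),W\ra-\la\mathcal{C}(W),X\ra$ does not vanish --- the curvature terms are symmetric, but the terms $Y(X')$ and $(\nabla_XY)(\gamma')$ are not. So the cross term in $Ind_\gamma(J^\perp+D)$ is $I(J^\perp,D)+I(D,J^\perp)$, not $2I(J^\perp,D)$. Integration by parts together with \eqref{AofJasZf} (note $\mathcal{A}(J^\perp)=-f''\gamma'-f'Y(\gamma')$; you dropped the $-f''\gamma'$ term, which is harmless only because $D\perp\gamma'$), the identity $f'=\la J^\perp,Y(\gamma')\ra$, and $D(0)=D(T)=0$ do give $I(J^\perp,D)=0$; that is what the skew-symmetry of $Y$ and the extra $\la Y(\gamma'),\cdot\ra$ summand buy you. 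But those same ingredients leave
$$I(D,J^\perp)=\int_0^T\left\{\la(\nabla_{\gamma'}Y)(J^\perp),D\ra-\la(\nabla_{J^\perp}Y)(\gamma'),D\ra+\la(\nabla_{D}Y)(\gamma'),J^\perp\ra\right\}dt,$$
and this residue is killed only by the closedness of $\Omega$: $d\Omega=0$ is equivalent to the cyclic identity $\la(\nabla_XY)(W),V\ra+\la(\nabla_WY)(V),X\ra+\la(\nabla_VY)(X),W\ra=0$, which combined with the skew-symmetry of $\nabla Y$ makes the integrand vanish pointwise. Nothing in your argument invokes $d\Omega=0$, and without it the cross term genuinely survives (the step would fail for a non-closed $2$-form), so this is a missing idea, not a routine verification. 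Separately, your closing line asserts $Ind_\gamma(J^\perp)=0$, contradicting your own earlier, correct, observation that $J^\perp(T)=Z(T)$ need not vanish and produces a boundary term; this slip is harmless because the decomposition already yields $Ind_\gamma(Z)-Ind_\gamma(J^\perp)=I(D,D)$ without needing $Ind_\gamma(J^\perp)=0$, but it should be corrected.
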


Note that in the case where the vector field $Z$ satisfies $Z(0)=Z(T)=0$, $J=
\gamma'$ is a Jacobi field that satisfies the above hypothesis, giving the
following corollary.

\begin{cor}\label{cor to Index Lemma}
If $\gamma$ has no conjugate points and $Z\in \Lambda^\perp_0$, then
$Ind_\gamma(Z)\geq 0$, with equality if and only if $Z=0$.
\end{cor}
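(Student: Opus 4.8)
The plan is to derive the Corollary directly from the Index Lemma by a suitable choice of the magnetic Jacobi field $J$. The key observation, already noted in the remark preceding the statement, is that $\gamma'$ is itself a magnetic Jacobi field, so I may take $J=\gamma'$. Its orthogonal component is $J^\perp = \gamma' - \la \gamma',\gamma'\ra\gamma' = \gamma' - \gamma' = 0$, since $|\gamma'|=1$. Thus $J^\perp \equiv 0$ is a magnetic Jacobi field whose orthogonal component vanishes identically, and in particular $J^\perp(0)=0$ and $Ind_\gamma(J^\perp)=0$.

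Now suppose $Z\in\Lambda_0^\perp$, meaning $Z$ is piecewise smooth, orthogonal to $\gamma'$, and satisfies $Z(0)=Z(T)=0$. First I would verify that the pair $J^\perp\equiv 0$ and $Z$ satisfies the boundary hypothesis \eqref{endpoints} of Lemma \ref{Index Lemma}: indeed $J^\perp(0)=0=Z(0)$ and $J^\perp(T)=0=Z(T)$, the latter equality holding precisely because $Z\in\Lambda_0$. I would also note that $\gamma$ has no conjugate points by hypothesis, so the lemma applies. Feeding this choice into the conclusion of the Index Lemma yields
$$0 = Ind_\gamma(J^\perp) \leq Ind_\gamma(Z),$$
which gives the desired inequality $Ind_\gamma(Z)\geq 0$.

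For the equality case, I would again invoke the final clause of Lemma \ref{Index Lemma}, which asserts that equality $Ind_\gamma(J^\perp)=Ind_\gamma(Z)$ holds if and only if $Z=J^\perp$. Since here $Ind_\gamma(J^\perp)=0$, equality in $Ind_\gamma(Z)\geq 0$ means $Ind_\gamma(Z)=Ind_\gamma(J^\perp)$, and the uniqueness clause forces $Z=J^\perp\equiv 0$. This completes the characterization of the equality case.

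This argument is essentially a one-line specialization, so I do not anticipate a genuine obstacle; the only point requiring care is confirming that $J=\gamma'$ legitimately satisfies the hypotheses of the Index Lemma as a magnetic Jacobi field with the stated boundary behavior. In particular I want to be sure that taking $J^\perp\equiv 0$ is permitted by the lemma, i.e. that the lemma does not implicitly assume $J$ is nontrivial or that $J^\perp$ is nonzero. Since the lemma is stated for an arbitrary magnetic Jacobi field $J$ with $J^\perp$ satisfying the endpoint conditions, and $\gamma'$ is verified earlier in the excerpt to be a magnetic Jacobi field, this choice is admissible and the corollary follows immediately.
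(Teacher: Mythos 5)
Your proof is correct and takes essentially the same route as the paper: the paper's own (one-sentence) justification is precisely that $J=\gamma'$ is a magnetic Jacobi field whose orthogonal component $J^\perp\equiv 0$ satisfies the endpoint hypothesis \eqref{endpoints} of Lemma \ref{Index Lemma}, so that $0=Ind_\gamma(J^\perp)\leq Ind_\gamma(Z)$ with equality iff $Z=J^\perp=0$. Your additional care in checking that the lemma permits the trivial choice $J^\perp\equiv 0$ is sound and requires no further argument.
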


In other words, if $\gamma$ has no conjugate points, the quadratic form
$Ind_\gamma : \Lambda^\perp_0\to \mathbb{R}$ is positive definite. \\

\begin{proof}[Proof of Lemma \ref{Index Lemma}]

Given a vector $v \in T_{\gamma(0)}M$ orthogonal to $\gamma'$, we can define a
magnetic Jacobi field $J_v$ along $\gamma$ as in (\ref{Jv}) that has $J_v(0)=0$
and $J_v'(0)=v$. Since there are no conjugate points to $\gamma(0)$ along
$\gamma$, these Jacobi fields are never $0$ nor parallel to $\gamma'$. There
exist a basis $\{v_1, \dots, v_n\}$ of $T_{\gamma(0)}M$, with $v_1=\gamma'$
such that $J_1(t)=\gamma'(t)$ and $J_i(t)=J_{v_i}(t)$, $i=2,\dots,n$ form a
basis for $T_{\gamma(t)}M$ for  all
$t\in (0,T]$. \\

If $Z$ is a vector field with $Z(0)=0$, we can write, for $t\in (0,T]$
$$Z(t)=\sum_{i=1}^n f_i(t) J_i(t)$$
where $f_1,\dots,f_n$ are smooth functions. This function can be smoothly
extended to $t=0$, as we now show. For $i\geq 2$, we can write $J_i(t)=tA_i(t)$
where $A_i$ are smooth vector fields with $A_i(0)=J_i'(0)$. Then each $A_i(t)$
is parallel to $J_i(t)$ and $\{\gamma'(t), A_2(t), \dots, A_n(t)\}$ is a basis
for all $t\in [0,T]$, so
$$Z(t)= g_1\gamma' +\sum_{i=2}^n g_i(t) A_i(t).$$
It follows that for $t\in (0,T]$, $g_1=f_1$ and $g_i(t)=t f_i(t)$ for $i\geq
2$, and since $Z(0)=0$, $g_i(0)=0$ and $f_i$ extends smoothly to $t=0$.\\

Using this representation we can write\\

$Ind_\gamma(Z) =$ $$-\sum_{i,j}\int_0^T\la\mathcal{A}(f_iJ_i),f_jJ_j\ra dt-\int_0^T\la
Y(\gamma'),Z\ra^2 dt+\left.\la Z,Z' \ra\right|_0^T +\sum \la Z,Z'^--Z'^+ \left.\ra\right|_{t_k}.$$
where $Z'^\pm$ stands for the left and right derivatives of
$Z$ at the points $t_k$ where the derivative is discontinuous. \\

And
\begin{equation}\label{coordinates}
\begin{array}{rl}\mathcal{A}(f_iJ_i) &=f_i''J_i+2f_i'J_i'+f_iJ_i''+f_iR(\gamma',J_i)\gamma'-f_i'Y(J_i)
-f_iY(J_i')-f_i(\nabla_{J_i}Y)(\gamma')\\ &= f_i\mathcal{A}(J_i)+ f_i''J_i+2f_i'J_i'-f_i'Y(J_i)\end{array}
\end{equation}
where the first term is $0$ since $J_i$ is a magnetic Jacobi field.\\

We know, moreover, from (\ref{wJJ=0}) that
$$\la J_i,J_j' \ra - \la J_i',J_j \ra + \la Y(J_i),J_j \ra=0$$
so we can write
$$\begin{array}{rl}\la \mathcal{A}(f_iJ_i),J_j \ra &= \la f_i''J_i,J_j \ra
+ 2\la f_i'J_i',J_j \ra - \la f_i'Y(J_i),J_j \ra \\ &=f_i'' \la J_i,J_j \ra +  f_i'\la J_i',J_j \ra + f_i'\la J_i,J_j' \ra \\ &=\frac{d}{dt}( f_i' \la J_i,J_j\ra)\end{array}$$
and
$$\int_0^T\la\mathcal{A}(f_iJ_i),f_jJ_j\ra dt = \int_0^T f_j\frac{d}{dt}( f_i' \la J_i,J_j\ra) dt=\left.\la f_i' J_i,f_jJ_j\ra\right|_0^T - \int_0^T\la f_i'J_i,f_j'J_j\ra dt.$$

Using this, and that $Z(0)=0$ we can write the index form as
$$Ind_\gamma(Z) = \int_0^T||\sum_1^n f_i'J_i||^2dt
- \left.\la \sum_1^n f_i' J_i, Z\ra\right|_0^T- \left.\sum \la \sum_1^n ({f_i'}^--{f_i'}^+)J_i,
Z\ra\right|_{t_k}$$
$$\hspace*{4cm}-\int_0^T\la Y(\gamma'),Z\ra^2 dt+\left.\la Z,Z' \ra\right|_0^T +\left.\sum \la Z,Z'^--Z'^+
\ra\right|_{t_k}$$
$$=\!\int_0^T\!||\sum_1^n f_i'J_i||^2-\la Y(\gamma'),Z\ra^2 dt
+\la\! Z(T),\!\sum_1^n \!f_i(T)J_i'(T)\!\ra +\left.\!\sum\! \la\! Z,\!\sum_1^n\!
f_i({J_i'}^-\!-{J_i'}^+\! )\!\ra\right|_{t_k}$$

where the last term is $0$ because $J_i$ is differentiable. \\

Let $W=\sum_2^n f_i'J_i$, and remember that $J_1=\gamma'$, then we get
$$||\sum_1^n f_i'J_i||^2= \la f_1'\gamma'+W,f_1'\gamma'+W \ra =
f_1'^2 + 2f_1' \la \gamma',W \ra + \la W,W \ra.$$

Since $Z$ is orthogonal to $\gamma'$, $\la Z,\gamma' \ra=0$ and by
differentiating $\la Z',\gamma' \ra= -\la Z,Y(\gamma') \ra$. Using also that
for magnetic Jacobi fields $\la J',\gamma'\ra=0$ we have
$$- \la Z,Y(\gamma') \ra =
\la \sum_1^n f_i' J_i+ f_i J_i', \gamma'\ra = f_1' +\la W, \gamma' \ra
$$ and
$$ \la Z,Y(\gamma') \ra^2= f_1'^2 + 2f_1' \la W, \gamma'\ra + \la W, \gamma' \ra^2.$$

so
$$Ind_\gamma(Z) = \int_0^T||\sum_1^n f_i'J_i||^2 - \la
Y(\gamma'),Z\ra^2 dt +\la Z(T),\sum_1^n f_i(T)J_i'(T) \ra$$
$$\hspace*{.2cm} = \int_0^T \la W,W \ra -\la W, \gamma' \ra^2 dt +\la Z(T),\sum_1^n f_i(T)J_i'(T) \ra $$
\begin{equation}\label{Index Eq}
= \int_0^T ||W^\perp|| dt  +\la Z(T),\sum_1^n f_i(T)J_i'(T)\ra \hspace*{1.6cm} \end{equation}
where $W^\perp$ is the component of $W$ orthogonal to $\gamma'$.\\

Let $J$ be the magnetic Jacobi field $J=\sum_2^n f_i(T)J_i$. The fact that it
is a magnetic Jacobi field is easy to see since $\mathcal{A}$ is linear over
$\R$, it is also clear that $J(0)=0$ and $J^\perp(T)=Z(T)$. Since
$$J^\perp =\sum_2^n f_i(T)J_i - \sum_2^n f_i(T)\la J_i,\gamma'\ra \gamma'$$
the corresponding functions $f_i(T)$ are constant for $i\geq2$, so $W=0$ and we
can see from equation (\ref{Index Eq}) that
$$Ind_\gamma(J^\perp)= \la J(T),\sum_1^n f_i(T)J_i'(T) \ra$$
that gives
$$Ind_\gamma(Z)-Ind_\gamma(J^\perp) =\int_0^T ||W^\perp|| dt \geq 0$$
with equality iff $W^\perp$ vanishes everywhere. That is
$$W^\perp=\sum_2^n f_i'J_i- \la W,\gamma'\ra\gamma'=0$$
and therefore $f_i$ constant for $i\geq 2$. So $Z = f_1\gamma' + \sum_2^n
f_iJ_i = f_1\gamma' +J$, and since $Z$ is orthogonal to $\gamma'$ this implies
that $Z=J^\perp$.\\
\end{proof}

In what follows we will want to use the above lemma for more general vector
fields, that do not vanish at $0$ but vanish at $T$. Since magnetic flows are
not reversible, we can't simply reverse time. We will consider instead the
associated magnetic flow $(M,g, -\Omega)$. This magnetic flow has the same
magnetic geodesics, but with opposite orientation.

\begin{lemma}
Let $(M,g, \Omega)$ be a magnetic field. Then $(M,g,-\Omega)$ is also a
magnetic field.
\begin{enumerate}
\item Magnetic geodesics in both magnetic fields agree, but with opposite orientation.
\item Jacobi fields agree in both magnetic fields.
\item Index form is independent of its orientation.
\end{enumerate}
\end{lemma}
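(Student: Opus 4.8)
The plan is to prove all three items through a single device: the time-reversal reparametrization $\sigma(t)=\gamma(T-t)$, together with careful bookkeeping of which quantities are \emph{even} and which are \emph{odd} under $t\mapsto T-t$. First I would record the elementary preliminaries. Since $(-\Omega)(\xi,\nu)=-\la Y(\xi),\nu\ra=\la -Y(\xi),\nu\ra$, the $(1,1)$-tensor associated to $-\Omega$ is $-Y$; it is skew-symmetric because $Y$ is, and $-\Omega$ is closed because $\Omega$ is, so $(M,g,-\Omega)$ is indeed a magnetic field, whose geodesic equation reads $\nabla_{\sigma'}\sigma'=-Y(\sigma')$. The two sign rules I will use repeatedly are: at corresponding parameters $s=T-t$ one has $\sigma'(t)=-\gamma'(s)$, and for any vector field $V$ along $\gamma$, the field $\tilde V(t):=V(T-t)$ along $\sigma$ satisfies $\tilde V'(t)=-V'(s)$ and $\tilde V''(t)=V''(s)$ (the reparametrization formula for the covariant derivative, with $\phi(t)=T-t$, $\phi'=-1$). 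Thus first covariant derivatives are odd, second covariant derivatives are even, and the base field is even.

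For part (1), I would apply this to $\tilde V=\sigma'$ itself: writing $\sigma'(t)=-\gamma'(s)$ and differentiating, the sign of the velocity and the sign $\phi'=-1$ cancel, giving $\nabla_{\sigma'}\sigma'=\nabla_{\gamma'}\gamma'(s)=Y(\gamma'(s))$, and since $\gamma'(s)=-\sigma'(t)$ this equals $-Y(\sigma'(t))$, exactly the magnetic geodesic equation for $-\Omega$. For part (2), I would set $\tilde J(t):=J(T-t)$ for a magnetic Jacobi field $J$ of $\Omega$ and substitute into the operator associated to $-Y$, namely $\tilde J''+R(\sigma',\tilde J)\sigma'+Y(\tilde J')+(\nabla_{\tilde J}Y)(\sigma')$. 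Tracking signs, the curvature term is even (its two velocity factors cancel the signs), while each of the two $Y$-terms acquires one sign from an odd factor that is cancelled by the sign in front of $-Y$; the upshot is that this operator evaluated on $\tilde J$ equals $\mathcal{A}(J)(s)=0$. The side condition transforms as $\la \tilde J',\sigma'\ra=\la -J',-\gamma'\ra=\la J',\gamma'\ra=0$, so $\tilde J$ is a magnetic Jacobi field for $-\Omega$; since $-(-\Omega)=\Omega$, the same construction run backwards shows the correspondence is a bijection.

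For part (3), I would run the substitution $\tilde Z(t):=Z(T-t)$ inside the index form for $-\Omega$. Because first derivatives enter squared ($|\tilde Z'|^2=|Z'|^2$) and the term $\la Y(\gamma'),Z\ra$ also enters squared, these pieces are manifestly even; the operator $\mathcal{C}$ for $-Y$ transforms exactly as $\mathcal{A}$ did above, so $\la \mathcal{C}_{-Y}(\tilde Z),\tilde Z\ra=\la \mathcal{C}_{Y}(Z),Z\ra$ at corresponding parameters. Hence the integrand at $t$ equals the original integrand at $s=T-t$, and the change of variables $s=T-t$ (which swaps the endpoints $0$ and $T$) yields equality of the two index forms. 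The only real subtlety — and the step I would double-check most carefully — is the sign bookkeeping in the covariant-derivative reparametrization together with the invariance of $R(\,\cdot\,,\,\cdot\,)\,\cdot$ under reversing the velocity; once those are pinned down, every term matches and the three claims follow.
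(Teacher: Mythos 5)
Your proposal is correct and follows essentially the same route as the paper: both prove all three parts by direct computation under time reversal (the paper uses $\gamma_-(t)=\gamma(-t)$ on $[-T,0]$, you use $\sigma(t)=\gamma(T-t)$ on $[0,T]$, a purely cosmetic difference), checking that first covariant derivatives and velocities flip sign while second derivatives, the curvature term, and the squared terms in the index form are unchanged, so that $\mathcal{A}_-(\tilde J)=\mathcal{A}(J)$, $\mathcal{C}_-(\tilde Z)=\mathcal{C}(Z)$, and the index integrals agree after the change of variables. Your sign bookkeeping is accurate, so there is nothing to fix.
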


If $\gamma:[0,T]\to M$ is a magnetic geodesic in $(M,g, \Omega)$, denote by
$\gamma_-$ the geodesic with opposite orientation, that is
$\gamma_-(t)=\gamma(-t)$, for $t\in [-T,0]$. Then $\gamma_-''(t)= \gamma''(-t)=
Y(\gamma'(-t))=-Y(\gamma_-'(t))$, so $\gamma_-$ is a magnetic geodesic in
$(M,g,-\Omega)$. Part $2$ follows from $1$ and the fact that magnetic Jacobi
fields are variational fields of variations through magnetic geodesics.
Alternatively, we can check that for $J_-(t)=J(-t)$:
$$\begin{array}{rl}\mathcal{A}_-(J_-(t)) &= J_-''(t)+R(\gamma_-'(t),J_-(t))\gamma_-'(t)+Y(J_-'(t))+(\nabla_{J_-}Y)(\gamma_-'(t))\\
&= J''(-t)+R(\gamma'(-t),J(-t))\gamma'(-t)-Y(J'(-t))-(\nabla_JY)(\gamma'(-t))\\ &= \mathcal{A}(J(-t)).\end{array}$$
and $$\la J_-'(t),\gamma_-'(t)\ra= \la-J'(-t), -\gamma'(-t)\ra= \la J'(-t),
\gamma'(-t)\ra. $$\\

From the above computation follows also that
$\mathcal{C}_-(Z_-(t))=\mathcal{C}(Z(t))$. So
$$Ind_{\gamma_-}(Z_-)=\int_{-T}^0\{|Z_-'(t)|^2-\la\mathcal{C_-}(Z_-(t)),Z_-(t)\ra
-\la Y_-(\gamma_-'(t)),Z_-(t)\ra^2\}dt$$
$$\hspace*{1.9cm} = \int_{-T}^0\{|Z'(-t)|^2-\la\mathcal{C}(Z(-t)),Z(-t)\ra-\la Y(\gamma'(-t)),Z(-t)\ra^2 \}dt$$
$$\hspace*{2.3cm} = \int_{0}^T\{|Z'(t)|^2-\la\mathcal{C}(Z(t)),Z(t)\ra-\la Y(\gamma'(t)),Z(t)\ra^2 \}dt= Ind_\gamma(Z).$$\\

\begin{cor}\label{reversed Index Lemma}
Lemma \ref{Index Lemma} holds when we replace equation \eqref{endpoints} with
$Z(0)=J^\perp(0)$ and $Z(T)=J^\perp(T)=0$.
\end{cor}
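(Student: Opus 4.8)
The plan is to reduce the corollary to Lemma \ref{Index Lemma} by reversing time, which is exactly what the preceding lemma was set up to permit. Working in the associated magnetic field $(M,g,-\Omega)$, let $\gamma_-(t)=\gamma(-t)$ be the reversed magnetic geodesic on $[-T,0]$, and for any field $V$ along $\gamma$ set $V_-(t)=V(-t)$. By the preceding lemma, $\gamma_-$ is a magnetic geodesic in $(M,g,-\Omega)$, $J_-$ is a magnetic Jacobi field along $\gamma_-$ whenever $J$ is one along $\gamma$, and (since $\gamma_-'(t)=-\gamma'(-t)$) orthogonal projection commutes with reversal, so $(J^\perp)_- = (J_-)^\perp$. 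The plan is to apply Lemma \ref{Index Lemma} to the triple $(\gamma_-,J_-,Z_-)$ on the interval $[-T,0]$ (the Index Lemma holds verbatim on any interval, with base point the left endpoint) and then transport the inequality back to $\gamma$ via part (3) of the preceding lemma.

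First I would check that the new endpoint hypotheses become the hypotheses \eqref{endpoints} after reversal. The conditions $Z(T)=J^\perp(T)=0$ translate into $Z_-(-T)=(J_-)^\perp(-T)=0$, i.e.\ vanishing at the left endpoint $-T$, which is the base point $\gamma_-(-T)=\gamma(T)$; while $Z(0)=J^\perp(0)$ becomes $Z_-(0)=(J_-)^\perp(0)$, the matching condition at the right endpoint. These are precisely \eqref{endpoints} for $\gamma_-$. Granting the Index Lemma for $\gamma_-$, it yields $Ind_{\gamma_-}((J_-)^\perp)\le Ind_{\gamma_-}(Z_-)$ with equality iff $Z_-=(J_-)^\perp$. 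Part (3) of the preceding lemma then gives $Ind_{\gamma_-}(Z_-)=Ind_\gamma(Z)$ and $Ind_{\gamma_-}((J_-)^\perp)=Ind_\gamma(J^\perp)$, and the equality case transports since $Z_-=(J_-)^\perp$ iff $Z=J^\perp$; this is exactly the claimed conclusion.

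The step requiring real care---and the main obstacle---is verifying the hypothesis of Lemma \ref{Index Lemma} for $\gamma_-$: that $\gamma_-$ has no conjugate points to its base point $\gamma_-(-T)=\gamma(T)$. Tracing the Jacobi-field correspondence of part (2), such a conjugate point corresponds to a nontrivial magnetic Jacobi field $K$ along $\gamma$ with $K(T)=0$ and $K^\perp(t_0)=0$ for some $t_0\in[0,T)$; so the corollary really needs ``no conjugate points to $\gamma(T)$,'' whereas the inherited hypothesis only supplies ``no conjugate points to $\gamma(0)$.'' I would close this gap with a symmetry argument assembled from tools already at hand. By Corollary \ref{cor to Index Lemma} the hypothesis makes $Ind_\gamma$ positive definite on $\Lambda_0^\perp$. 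If some $\gamma(t_0)$ were conjugate to $\gamma(T)$, take the associated $K$; since the only magnetic Jacobi fields parallel to $\gamma'$ are multiples of $\gamma'$, none of which vanishes at $T$ except the zero field, we have $K^\perp\not\equiv 0$. Define $W\in\Lambda_0^\perp$ by $W=0$ on $[0,t_0]$ and $W=K^\perp$ on $[t_0,T]$. Then $W\neq 0$, while by \eqref{Ind(J)=0} applied on $[t_0,T]$---where $K^\perp$ vanishes at both ends---the integral over $[0,t_0]$ drops out and $Ind_\gamma(W)=0$, contradicting positive definiteness. This establishes the needed no-conjugate-point condition for $\gamma_-$ and completes the reduction.
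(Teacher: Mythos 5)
Your reduction is exactly the paper's own proof: the paper likewise passes to $(M,g,-\Omega)$, considers $Z_-$ and $J^\perp_-$, asserts that they satisfy the hypotheses of Lemma \ref{Index Lemma}, and concludes via $Ind_\gamma(J^\perp)=Ind_{\gamma_-}(J_-^\perp)\leq Ind_{\gamma_-}(Z_-)=Ind_\gamma(Z)$, using the preceding lemma to transport geodesics, Jacobi fields and the index form. The difference is your third paragraph, and it is a genuine improvement in rigor rather than a detour. The paper's one-line proof silently assumes that the no-conjugate-point hypothesis transfers to the reversed geodesic; as you observe, this is not automatic, because magnetic conjugacy is an asymmetric relation (the Jacobi field must vanish at the base point but need only be \emph{parallel} to the velocity at the other end), so ``no conjugate points to $\gamma(0)$ along $\gamma$'' and ``no conjugate points to $\gamma_-(-T)=\gamma(T)$ along $\gamma_-$'' are a priori different conditions. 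Your argument closing this gap is correct: a backward conjugate point yields a nontrivial magnetic Jacobi field $K$ along $\gamma$ with $K(T)=0$ and $K^\perp(t_0)=0$; the broken field $W$ (zero on $[0,t_0]$, equal to $K^\perp$ on $[t_0,T]$) lies in $\Lambda_0^\perp$ and satisfies $Ind_\gamma(W)=0$ by \eqref{Ind(J)=0} applied on $[t_0,T]$, contradicting the positive definiteness from Corollary \ref{cor to Index Lemma}. One small point of precision: to get $W\neq 0$ you need $K^\perp\not\equiv 0$ on the subinterval $[t_0,T]$, not merely on $[0,T]$; but your own argument gives this when run on $[t_0,T]$ itself, since $K^\perp\equiv 0$ there would force $K$ to be a constant multiple of $\gamma'$ on $[t_0,T]$, which vanishes at $T$ and hence identically, so $K\equiv 0$ everywhere by uniqueness for the Jacobi equation. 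In short, your proof follows the paper's route and additionally justifies a step the paper leaves implicit.
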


When $Z(T)=J^\perp(T)=0$, we can consider $Z_-$ and $J^\perp_-$, this will
satisfy the hypothesis of lemma \ref{Index Lemma}, so we have:
$$Ind_\gamma(J^\perp)=Ind_{\gamma_-}(J_-^\perp)\leq Ind_{\gamma_-}(Z_-)=Ind_\gamma(Z).$$\\

\begin{lemma}\label{Negative Index}
If $\gamma(t_0)$ is conjugate to $\gamma(0)$ along $\gamma$, for some $t_0<T$,
then there is a vector field $Z\in \Lambda^\perp_0$  with $Ind_\gamma(Z)<0$.

\end{lemma}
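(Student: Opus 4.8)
The plan is to exhibit a vector field whose index form is strictly negative by exploiting the vanishing of the index form on a magnetic Jacobi field at the conjugate point. Since $\gamma(t_0)$ is conjugate to $\gamma(0)$ with $t_0<T$, Proposition stated above gives a nontrivial magnetic Jacobi field $J$ along $\gamma$ with $J(0)=0$ and $J(t_0)$ parallel to $\gamma'$. Passing to the orthogonal component $J^\perp$, we have $J^\perp(0)=0$ and $J^\perp(t_0)=0$, while $J^\perp$ is not identically zero (otherwise $J$ would be parallel to $\gamma'$ throughout, forcing $J=c\gamma'$, which contradicts $J(0)=0$ together with $J$ nontrivial). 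Thus on the subinterval $[0,t_0]$ the field $J^\perp$ lies in the space of orthogonal fields vanishing at both endpoints, and by equation \eqref{Ind(J)=0} its index form over $[0,t_0]$ vanishes.

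The idea is then to build the competitor $Z\in\Lambda_0^\perp$ by extending $J^\perp$ past $t_0$ in a way that introduces a genuine corner. Concretely, I would define $Z$ to equal $J^\perp$ on $[0,t_0]$ and to equal a small perturbation on $[t_0,T]$ that still vanishes at $T$; the simplest choice is to first consider the field $Z_0$ that equals $J^\perp$ on $[0,t_0]$ and is identically zero on $[t_0,T]$. This $Z_0$ is piecewise smooth, orthogonal to $\gamma'$, and vanishes at $0$ and $T$, so $Z_0\in\Lambda_0^\perp$. Because $J^\perp$ is a genuine (orthogonal component of a) Jacobi field on $[0,t_0]$ vanishing at both ends, $Ind_{\gamma|_{[0,t_0]}}(J^\perp)=0$, and since $Z_0$ is zero on the remaining interval, $Ind_\gamma(Z_0)=0$ as well. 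The key point is that $Z_0$ has a corner at $t_0$: its left derivative is $(J^\perp)'(t_0)$, which is nonzero, while its right derivative is $0$, so the two one-sided derivatives disagree.

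The main step is to decrease the index strictly by smoothing this corner. I would invoke Corollary \ref{cor to Index Lemma} applied on the subinterval $[0,t_0]$ together with a first-variation argument: perturb $Z_0$ near $t_0$ by a field $\eta W$ for a small parameter $\eta$ and a suitable bump vector field $W$ orthogonal to $\gamma'$, chosen so that the boundary corner term $\la Z,Z'^--Z'^+\ra$ in the alternative expression for $Ind_\gamma$ contributes a negative linear term in $\eta$. Since the index form is quadratic, $Ind_\gamma(Z_0+\eta W)=Ind_\gamma(Z_0)+\eta\,(\text{linear term})+\eta^2\,(\text{quadratic term})$, and because the linear term comes from the nonzero jump $(J^\perp)'(t_0^-)\neq 0$ it can be arranged to be strictly negative; then for small enough $\eta>0$ the whole expression is negative. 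This gives the desired $Z=Z_0+\eta W\in\Lambda_0^\perp$ with $Ind_\gamma(Z)<0$.

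The hard part will be confirming that the corner genuinely produces a strictly negative first variation rather than a degenerate zero, i.e.\ verifying that $(J^\perp)'(t_0)\neq 0$ and that the bump $W$ can be chosen to pair nontrivially with this jump. The nonvanishing of $(J^\perp)'(t_0)$ follows from uniqueness of magnetic Jacobi fields from initial data: if both $J^\perp(t_0)=0$ and $(J^\perp)'(t_0)=0$ held, then $J^\perp\equiv 0$, contradicting that $J$ is nonparallel to $\gamma'$. Once this is secured, choosing $W$ to agree in direction with $(J^\perp)'(t_0)$ near $t_0$ makes the jump term $\la W(t_0),(J^\perp)'(t_0^-)\ra$ nonzero, and a sign choice makes it negative, completing the argument.
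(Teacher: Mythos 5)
Your proposal is correct, and it shares the paper's first step exactly: the paper also forms the broken field (there called $\tilde J$) equal to $J^\perp$ on $[0,t_0]$ and $0$ on $[t_0,T]$, and observes via \eqref{Ind(J)=0} that its index vanishes. Where you genuinely diverge is the mechanism that turns the corner at $t_0$ into strict negativity. The paper cuts the corner by replacing $\tilde J$ on $[t_0-\epsilon,t_0+\epsilon]$ with the perpendicular part of the magnetic Jacobi field having the same endpoint values, and gets the strict decrease from the equality case of Lemma~\ref{Index Lemma} together with Corollary~\ref{reversed Index Lemma} (this is precisely where the reversed-orientation $(M,g,-\Omega)$ machinery is needed, since the replacement segment vanishes at its right endpoint rather than its left). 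You instead keep the broken field and add a bump perturbation $\eta W$ supported near $t_0$, extracting from the polarization of $Ind_\gamma$ the linear term $\eta\la W(t_0),(J^\perp)'(t_0^-)\ra$ coming from the jump $\la Z,Z'^--Z'^+\ra$; this avoids the Index Lemma and the reversed-orientation construction altogether, which is a real simplification, at the price of two estimates you should make explicit: (i) $(J^\perp)'(t_0^-)$ is orthogonal to $\gamma'(t_0)$, so that $W$ can indeed be chosen in $\Lambda^\perp$ pairing with it --- this holds because $\la (J^\perp)',\gamma'\ra=-\la J^\perp,Y(\gamma')\ra$ and $J^\perp(t_0)=0$; and (ii) when $W$ is supported in $[t_0-\delta,t_0+\delta]$, the remaining linear-in-$\eta$ contributions are $O(\delta)$ (the potentially dangerous factors $|W'|=O(1/\delta)$ are absorbed by $|J^\perp|=O(\delta)$ near $t_0$, or removed by a further integration by parts), so the fixed negative corner term dominates for small $\delta$, and then small $\eta$ beats the $\eta^2\,Ind_\gamma(W)$ term. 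One small repair to your nondegeneracy argument: $J^\perp$ is not itself a magnetic Jacobi field, so you cannot invoke uniqueness for it directly; instead apply uniqueness to the genuine magnetic Jacobi field $J-f(t_0)\gamma'$, which, if $J^\perp(t_0)=(J^\perp)'(t_0)=0$, vanishes at $t_0$ together with its derivative (using $f'(t_0)=\la J^\perp(t_0),Y(\gamma'(t_0))\ra=0$), forcing $J$ to be a constant multiple of $\gamma'$ and hence zero --- here one also uses the paper's remark that constant multiples of $\gamma'$ are the only magnetic Jacobi fields parallel to $\gamma'$. It is worth noting that this nondegeneracy of the corner is equally necessary, and left implicit, in the paper's own proof, since there one must rule out the equality case of Lemma~\ref{Index Lemma}.
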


Let $J$ a Jacobi field along $\gamma$ with $J(0)=J(t_0)=0$, and $\tilde{J}$ be
$J^\perp$ for $t\in [0,t_0]$ and $0$ for $t\in [t_0,T]$. Then
$Ind_\gamma(\tilde{J})=0$. We can use the Index Lemma and corollary
\ref{reversed Index Lemma} to show that cutting the corner at $t_0$ by
replacing $\left.\tilde{J}\right|_{[t_0-\epsilon, t_0+\epsilon]}$ by a Jacobi field with
the same endpoints decreases the value of the Index form. So
this new vector field has  $Ind_\gamma(Z)< 0$.\\

\begin{lemma}\label{Index at conjugate point}
If $\gamma(T)$ is the first conjugate point to $\gamma(0)$ along $\gamma$ and
$Z\in \Lambda^\perp_0$, then $Ind_\gamma(Z)\geq 0$, with equality if and only
if $Z=0$ or $Z$ is the perpendicular component of a Jacobi field.
\end{lemma}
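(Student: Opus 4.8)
The plan is to prove Lemma \ref{Index at conjugate point} by a continuity/limiting argument, approximating the boundary case (where $\gamma(T)$ is the first conjugate point) by the non-conjugate case handled in Corollary \ref{cor to Index Lemma}. The key point is that the index form is positive definite on $\Lambda_0^\perp$ whenever there are no conjugate points in $(0,T]$, and I want to push this to the limit as the conjugate point approaches the endpoint $T$.

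First I would establish nonnegativity. Given $Z\in\Lambda_0^\perp$, consider the restriction of $\gamma$ to $[0,T-\epsilon]$ for small $\epsilon>0$. Since $\gamma(T)$ is the \emph{first} conjugate point, there are no conjugate points to $\gamma(0)$ along $\gamma|_{[0,T-\epsilon]}$, so by Corollary \ref{cor to Index Lemma} the form $Ind_{\gamma|_{[0,T-\epsilon]}}$ is positive definite on $\Lambda_0^\perp([0,T-\epsilon])$. The natural move is to reparametrize: rescale $Z$ on $[0,T]$ to a vector field $Z_\epsilon$ that vanishes at $T-\epsilon$ (for instance, by composing with a diffeomorphism $[0,T]\to[0,T-\epsilon]$, or by linearly damping $Z$ to zero on $[T-\epsilon,T]$), apply positivity on the shorter interval, and then let $\epsilon\to 0$. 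Because the integrand of $Ind_\gamma$ is continuous in its arguments and $Z_\epsilon\to Z$ together with $Z_\epsilon'\to Z'$ in an appropriate sense, one gets $Ind_\gamma(Z)=\lim_{\epsilon\to 0} Ind_{\gamma}(Z_\epsilon)\geq 0$. I expect the main technical care to be in choosing the cutoff/reparametrization so that the limit of the index values is exactly $Ind_\gamma(Z)$ and no boundary terms are lost; this is the step most likely to hide an obstacle, since the derivative $Z'$ near $T$ must be controlled.

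Next I would handle the equality case, which is the heart of the lemma. Suppose $Ind_\gamma(Z)=0$ with $Z\in\Lambda_0^\perp$, $Z\neq 0$. The strategy is to show $Z$ must be the orthogonal component of a magnetic Jacobi field vanishing at both endpoints. One clean route is variational: if $Ind_\gamma(Z)=0$ is the minimum of a positive-semidefinite form, then $Z$ lies in the kernel, i.e. it is a critical point, and computing the first variation (using the second expression for $Ind_\gamma$ in terms of $\langle\mathcal{A}(Z),Z\rangle$ derived in Section 3) forces $\mathcal{A}(Z^{\mathrm{parallel completion}})=0$, so that the Jacobi equation holds in the weak and then, by regularity of solutions to the linear second-order system \ref{A=0}, the strong sense. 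Concretely, I would take any variation $Z+sW$ with $W\in\Lambda_0^\perp$, use $\frac{d}{ds}\big|_{s=0}Ind_\gamma(Z+sW)=0$ for all such $W$, integrate by parts via equation \ref{Ind(J)=0}, and read off that $Z$ completes to a magnetic Jacobi field $J$ with $J(0)=0$, $J^\perp=Z$; the condition $Z(T)=0$ then gives $J(T)$ parallel to $\gamma'$, consistent with $\gamma(T)$ being conjugate.

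Conversely, if $Z=J^\perp$ for a magnetic Jacobi field $J$ with $J(0)=0$ and $J(T)$ parallel to $\gamma'$ (so $J^\perp(T)=0$, placing $Z$ in $\Lambda_0^\perp$), then equation \ref{Ind(J)=0} applies directly and yields $Ind_\gamma(Z)=0$. \textbf{The hard part} will be ensuring the first-variation argument produces a genuine magnetic Jacobi field despite the twist that magnetic Jacobi fields do not remain orthogonal to $\gamma'$: the vector field $Z$ lives in $\Lambda^\perp$, but the Jacobi equation $\mathcal{A}(J)=0$ naturally mixes parallel and perpendicular components, as equation \eqref{AofJasZf} shows. I would therefore reconstruct the parallel part $f\gamma'$ from $Z=J^\perp$ using the ODE $f'=\langle Z, Y(\gamma')\rangle$ with $f(0)=0$, check that the resulting $J=Z+f\gamma'$ satisfies both \ref{A=0} and \ref{J'gamma=0}, and only then invoke uniqueness of the Jacobi field from its initial data to conclude $Z$ is exactly the perpendicular component of a genuine magnetic Jacobi field.
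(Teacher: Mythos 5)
Your proposal is correct in outline, but it takes a genuinely different route from the paper. The paper proves this lemma by rerunning the computation of Lemma \ref{Index Lemma} verbatim: it expands $Z$ in a basis of magnetic Jacobi fields $J_i$ vanishing at $0$, the only new ingredient being that the coefficient functions $f_i$ still extend smoothly to the conjugate endpoint $t=T$, where $k$ of the $J_i$ become parallel to $\gamma'$ (one writes $J_i=(t-T)A_i+J_i(T)$ there); formula (\ref{Index Eq}) then yields $Ind_\gamma(Z)=\int_0^T ||W^\perp||\,dt$ because $Z(T)=0$ kills the boundary term, and both the inequality and the characterization of equality ($W^\perp\equiv 0$, so the $f_i$ are constant and $Z=J^\perp$) drop out of this single identity. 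You instead get nonnegativity softly, by rescaling $Z$ onto $[0,T-\epsilon]$, applying Corollary \ref{cor to Index Lemma}, and letting $\epsilon\to 0$; this is legitimate and is exactly the continuity mechanism the paper itself sets up at the end of Section 3 (the frame $e_i'=Y(e_i)$ with $e_1=\gamma'$ and rescaled coefficients $a_i(tT_0/T_s)$), and it avoids the degenerate-basis bookkeeping entirely. The cost is transferred to your equality case, and there lies the one substantive verification you flagged but did not supply: the operator $\mathcal{A}$ is not \emph{obviously} formally self-adjoint (the terms $Y(Z')$ and $(\nabla_ZY)(\gamma')$ obstruct a naive integration by parts), so polarizing $-\int\la\mathcal{A}(Z),Z\ra\,dt$ into $-\int\la\mathcal{A}(Z),W\ra\,dt$ and "reading off" the Euler--Lagrange equation needs justification. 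It does work: a computation using the closedness of $\Omega$ shows that $\int\la\mathcal{A}(Z),W\ra\,dt=\int\la\mathcal{A}(W),Z\ra\,dt$ for fields vanishing at the endpoints, so the kernel condition becomes $(\mathcal{A}(Z))^\perp+\la Y(\gamma'),Z\ra Y(\gamma')=0$, which by (\ref{AofJasZf}) (together with the fact that $\la\mathcal{A}(Z),\gamma'\ra=-\frac{d}{dt}\la Z,Y(\gamma')\ra$ holds automatically for orthogonal $Z$) is precisely the statement that $Z+f\gamma'$, with $f'=\la Z,Y(\gamma')\ra$ and $f(0)=0$, is a magnetic Jacobi field; alternatively one can avoid the closedness computation by noting that perpendicular components of Jacobi fields with $J(0)=0$ are constrained minimizers by Lemma \ref{Index Lemma}, hence satisfy the Euler--Lagrange equation, and matching dimensions (both solution spaces with $Z(0)=0$ have dimension $n-1$). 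Your converse direction via (\ref{Ind(J)=0}) is the same as the paper's. In sum, the paper gets everything from one explicit computation at the price of careful bookkeeping at the degenerate endpoint, while your argument is modular and closer to the classical Riemannian one, at the price of the self-adjointness/closedness verification above.
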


This is an extension of lemma \ref{Index Lemma}, and it is proved by a similar
argument.We will follow the proof of lemma \ref{Index Lemma}, using the same
notation. Suppose $\gamma(T)$ is a first conjugate point and has multiplicity
$k$, then we can find a basis $\{v_0, \dots, v_{n-1}\}$ of $T_{\gamma(0)}M$,
with $v_0=\gamma'$ such that $J_{v_i}(T)$ are parallel to $\gamma'(T)$ for
$i=1,\dots,k$ and are not parallel to  $\gamma'(T)$ for $i=k+1,\dots,n-1$. Then
$J_0(t)=\gamma'(t)$ and $J_i(t)=J_{v_i}(t)$, $i=1,\dots,n-1$ form a basis for
$T_{\gamma(t)}M$ for all $t\in (0,T)$.

If $Z$ is a vector field in $\Lambda_0^\perp$, we can write, for $t\in (0,T)$
$$Z(t)=\sum_{i=0}^{n-1} f_i(t) J_i(t)$$
where $f_0,\dots,f_{n-1}$ are smooth functions. We can extend these functions
to $t=0$ as before. To extend $f_i$ to $t=T$ we can write
$J_i=(t-T)A_i+J_i(T)$, for $i=1, \dots, k$. Then $A_i$ are smooth vector fields
with $A_i(T)=J_i'(T)$ that is orthogonal to $\gamma'$ since $\la J',\gamma'
\ra=0$ for all Jacobi fields. Then $\{\gamma', A_1, \dots, A_k, J_{k+1},\dots,
J_{n-1}\}$ are a basis for all $t\in (0,T]$, and
$$Z(t)= g_0\gamma' +\sum_{i=1}^k g_i(t) A_i(t)+\sum_{i=k+1}^{n-1} g_i(t) J_i(t).$$
It follows that for $t\in (0,T)$, $g_i=f_i$ for $i>k$, $g_i(t)=(t-T) f_i(t)$
for $0<i\leq k$, and $g_0=f_0+\sum_{i=1}^k f_i(t) \la J_i(T),\gamma'\ra $.
Since $Z(T)=0$, $g_i(T)=0$ and $f_i$ extends smoothly to $t=T$.\\

Following the proof of lemma \ref{Index Lemma}, we get from (\ref{Index Eq})

$$Ind_\gamma(Z)= \int_0^T ||W^\perp|| dt  +\la Z(T),\sum_1^n f_i(T)J_i'(T) \ra$$
$$= \int_0^T ||W^\perp|| dt \geq 0 \hspace*{2.1cm}$$
with equality iff $W^\perp$ vanishes everywhere. That is when $f_i$ constant
for $i>0$. So $Z = f_1\gamma' +J$ for some Jacobi field $J$, and since $Z$ is
orthogonal to $\gamma'$ this implies that $Z=J^\perp$.\\

\begin{cor}
$Ind_\gamma(Z)$ restricted to $\Lambda_0^\perp$ is positive definite if and
only if $\gamma$ has no conjugate points.
\end{cor}

When $\gamma$ has no conjugate points, it follows directly from corollary
\ref{cor to Index Lemma} that $Ind_\gamma$ is positive definite. In the case
that the endpoints are conjugate to each other $Ind_\gamma$ has nontrivial
kernel, as can be seen from equation \ref{Ind(J)=0}. If $\gamma$ has conjugate
points, we saw on lemma \ref{Negative Index} that there is a vector field in
$\Lambda_0^\perp$ with $Ind_\gamma <0$, therefore it is not positive
definite.\\

We will be interested in the dependence of the index form on its parameters.
For this consider a continuous (possibly constant) family of vectors $\xi(s)\in
S_xM$ and the correspondent family of magnetic geodesics $\gamma_s(t)=
exp^\mu_x(t\xi(s))$. Let $T_s$, the length of each geodesic, be continuous on
$s$. Let $\Lambda_s$ denote the vector space of piecewise smooth vector fields
$Z_s$ along $\gamma_s$, perpendicular to $\gamma_s'$ and such that $Z(0) =
Z(T_s) = 0$.

Let $\{v_1,\dots,v_n\}$ be an orthonormal basis with $v_1=\gamma_0'(0)$, and
extend it to a continuous family  $\{v_1(s),\dots,v_n(s)\}$ of orthonormal
basis for each $s$ with  $v_1(s)=\xi(s)$. This can be done by defining
$$v_i(s)=\rho_s(v_i)$$
where $\rho_s$ is a rotation of $S^{n}$ with $\rho_s(\xi(0))= \xi(s)$. We
extend this for all $t$ by requiring that
\begin{equation}
 \nabla_{\gamma_s'}e_i=Y(e_i)
\end{equation}
along each magnetic geodesic. As in \ref{ON extension} this gives an
orthonormal basis for each point.

Using this basis, we can extend any vector field $Z=\sum_{2}^n a_i(t)e_i(t)$ in
$\Lambda_0$ to a vector field over the family of geodesics by
$$Z(s,t)=\sum_{2}^n a_i(t\frac{T_0}{T_s})e_i(s,t),$$
that belongs to $\Lambda_s$ when restricted to each $\gamma_s$.  We will denote
the set of such vector fields by $\Lambda_{[0,1]}$.

Since everything depends continuously on $s$, so does
$$Ind_{\gamma_s}(Z) = \int_0^{T_s}\{|Z'|^2-\la\mathcal{C}(Z),Z\ra-\la Y(\gamma_s'),Z\ra^2
\}dt.$$\\

We will be mostly interested on whether the Index form is positive definite.
For such a family of curves, the fact that the Index form  is positive definite
(and therefore the non existence of conjugate points) depends continuously on
$s$ in the following sense. If the index form is positive definite for some
$s_0$, and has a negative value for some $s_1$ there must be some $s\in
(s_0,s_1)$ where it has non-trivial kernel. Moreover, the first such $s$ will
be when
$\gamma_s$ has conjugate endpoints and no conjugate points in the interior.\\

\section{Simple metrics and boundary data}\label{Simple metrics and boundary data}

Consider a manifold $M_1$ such that $M\subset int(M_1)$, extend $g$ and
$\Omega$ smoothly. We say that $M$ is \emph{magnetic convex} at $x\in \partial
M$ if there is a neighborhood $U$ of $x$ in $M_1$ such that all unit speed
magnetic geodesics in $U$, passing through $x$ and tangent to $\partial M$ at
$x$, lie in $U \setminus int(M)$. It is not hard to see that this definition
depend neither on the choice of $M_1$ nor on the way we extend $g$ and $\Omega$
to $M_1$.\\

Let $\text{II}$ stand for the second fundamental form of $\partial M$ and
$\nu(x)$ for the inward pointing normal. Then if $M$ is magnetic convex
$$\text{II}(x,\xi)\geq \la Y_x(\xi),\nu(x)\ra $$ for all $(x,\xi)\in TM$. \cite[Lemma A.6]{DPSU}.

We say that $\partial M$ is \emph{strictly magnetic convex} if
$$\text{II}(x,\xi)> \la Y_x(\xi),\nu(x)\ra
$$ for all $(x,\xi)\in TM$.

This condition implies that the tangent geodesics do not intersect $M$ except
for $x$, as shown in \cite[Lemma A.6]{DPSU}.\\

We say that $M$ is \emph{simple}(w.r.t.$(g,\Omega)$) if $\partial M$ is
strictly magnetic convex and the magnetic exponential map $exp^\mu_x:
(exp^\mu_x)^{-1}(M)\to M$ is a diffeomorphism for every $x\in M$.\\

For $(x,\xi)\in SM$, let $\gamma_{\xi} : [l^-(x,\xi), l(x,\xi)]\to M$ be the
magnetic geodesic such that $\gamma_{\xi}(0) = x$, $\gamma_{\xi}'(0) = \xi$,
and $\gamma_{\xi}(l^-(x,\xi)), \gamma_{\xi}(l(x,\xi))\in\partial M$. Where $l-$
and $l$ can take the values $\pm \infty$ if the magnetic geodesic
$\gamma_{\xi}$ stays in the interior of $M$ for all time in the corresponding
direction.\\

Let $\partial_+SM$ and $\partial_-SM$ denote the bundles of inward and outward
unit vectors over $\partial M$:
$$\partial_+SM=\{(x,\xi)\in SM : x\in\partial M, \la\xi,\nu(x)\ra \geq 0\},$$
$$\partial_-SM=\{(x,\xi)\in SM : x\in\partial M, \la\xi,\nu(x)\ra \leq 0\},$$
where $\nu$ is the inward unit normal to $\partial M$. Note that $\partial(SM)
=
\partial_+SM \cup \partial_-SM$ and $\partial_+SM \cap \partial_-SM=S(\partial M)$.\\

In the case that $M$ is simple, is clear that the functions $l^-(x,\xi)$ and
$l(x, \xi)$ are continuous and, on using the implicit function theorem, they
are easily seen to be smooth near a point $(x, \xi)$ such that the magnetic
geodesic $\gamma_{\xi}(t)$ meets $\partial M$ transversely at $t = l^-(x, \xi)$
and $t = l(x, \xi)$ respectively. By the definition of strict magnetic
convexity, $\gamma_{\xi}(t)$ meets $\partial M$ transversely for all $(x,\xi)
\in SM \setminus S(\partial M).$ In fact, these functions are smooth
everywhere, as was shown by Dairbekov, Paternain, Stefanov and Uhlmann in the
following lemma.\\

\begin{lemma}\cite[Lemma 2.3]{DPSU}\label{lsmooth}
For a simple magnetic system, the function $\mathbb{L}:\partial (SM)\to \R$,
defined by
$$\mathbb{L}(x,\xi):=\begin{cases}l(x,\xi) & if (x,\xi)\in\partial_+SM  \\
l^-(x,\xi) & if (x,\xi)\in\partial_-SM  \end{cases}  $$ is smooth. In
Particular, $l:\partial_+SM \to\R$ is smooth. The ratio
$$\frac{\mathbb{L}(x,\xi)}{\la\nu(x),\xi\ra}$$
is uniformly bounded on $\partial(SM)\setminus S(\partial M)$.\\
\end{lemma}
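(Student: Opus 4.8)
The plan is to realize $\mathbb{L}$ as the solution of an implicit equation and apply the implicit function theorem, the only delicacy being the vectors tangent to $\partial M$, where the naive equation degenerates. First I would fix a boundary defining function: let $\rho$ be the signed distance to $\partial M$ in the ambient manifold $M_1$, taken positive on $int(M)$, so that $\rho=0$ and $\nabla\rho=\nu$ on $\partial M$ and $|\nabla\rho|\equiv 1$ near $\partial M$. Set
$$\varphi(t,x,\xi)=\rho\big(\gamma_{\xi}(t)\big),\qquad (x,\xi)\in\partial(SM),$$
which is smooth jointly in $(t,x,\xi)$ because the magnetic flow $\psi^{t}$ is a smooth flow on $SM_1$. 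By definition $\mathbb{L}(x,\xi)$ is the nonzero boundary-hitting time, i.e.\ the exit time $l$ on $\partial_+SM$ and the entrance time $l^-$ on $\partial_-SM$, so it is a root of $\varphi(\cdot,x,\xi)$.

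Away from $S(\partial M)$ the argument is the one already indicated in the text: for $\xi$ not tangent to $\partial M$, strict convexity (Lemma A.6 of \cite{DPSU}) guarantees that $\gamma_\xi$ meets $\partial M$ transversally at $t=\mathbb{L}$, so $\partial_t\varphi(\mathbb{L},x,\xi)=\la\nabla\rho,\gamma_\xi'(\mathbb{L})\ra\neq0$ and the implicit function theorem applied to $\varphi$ yields a smooth local solution $\mathbb{L}(x,\xi)$.

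The main obstacle is the behaviour near a tangent vector $(x_0,\xi_0)\in S(\partial M)$, where $\partial_t\varphi(0,x_0,\xi_0)=\la\nu,\xi_0\ra=0$ and the implicit function theorem fails at $t=0$. To fix this I would divide out the root that is present at $t=0$ for every boundary vector. Since $\varphi(0,x,\xi)=\rho(x)=0$ on all of $\partial(SM)$, Hadamard's lemma shows that
$$\psi(t,x,\xi):=\frac{\varphi(t,x,\xi)}{t}=\int_0^1\partial_t\varphi(st,x,\xi)\,ds$$
extends to a smooth function with $\psi(0,x,\xi)=\partial_t\varphi(0,x,\xi)=\la\nu(x),\xi\ra$ and $\psi_t(0,x,\xi)=\tfrac12\partial_t^2\varphi(0,x,\xi)$. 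A direct computation using $\gamma''=Y(\gamma')$ gives $\partial_t^2\varphi(0,x,\xi)=\mathrm{Hess}\,\rho(\xi,\xi)+\la\nabla\rho,Y(\xi)\ra$, which on the boundary equals $-\big(\text{II}(x,\xi)-\la Y_x(\xi),\nu(x)\ra\big)$. Hence strict magnetic convexity is exactly the statement that $\psi_t(0,x_0,\xi_0)<0$. Since also $\psi(0,x_0,\xi_0)=\la\nu,\xi_0\ra=0$, the implicit function theorem now applies to $\psi$ and produces a smooth function $t=\mathbb{L}(x,\xi)$ near $(x_0,\xi_0)$ with $\mathbb{L}(x_0,\xi_0)=0$; because $\psi$ has discarded precisely the spurious root at $t=0$, this solution is the genuine exit/entrance time, so $\mathbb{L}$ is smooth across $S(\partial M)$ as well.

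Finally, for the boundedness of the ratio I would read it off the same equation. From $\psi(\mathbb{L},x,\xi)=0$ and a first order Taylor expansion in $t$ about $0$ there is $\tilde t$ between $0$ and $\mathbb{L}$ with $0=\psi(0,x,\xi)+\mathbb{L}\,\psi_t(\tilde t,x,\xi)$, whence
$$\frac{\mathbb{L}(x,\xi)}{\la\nu(x),\xi\ra}=\frac{-1}{\psi_t(\tilde t,x,\xi)}.$$
Near $S(\partial M)$ the denominator stays close to $\tfrac12\partial_t^2\varphi(0)<0$, hence is bounded away from zero and the ratio is bounded; away from $S(\partial M)$, $\la\nu,\xi\ra$ is bounded below while $\mathbb{L}$ is bounded by compactness of $M$ and simplicity, so the ratio is bounded there too, giving the claimed uniform bound. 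The step I expect to be the crux is the division trick together with the identification $\psi_t(0)=-\tfrac12\big(\text{II}-\la Y\xi,\nu\ra\big)$, since it is exactly here that strict magnetic convexity converts a degenerate implicit equation into a nondegenerate one.
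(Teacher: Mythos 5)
Your proof is correct, but there is essentially nothing in the paper to compare it against: the lemma is quoted from \cite[Lemma 2.3]{DPSU} without proof, the author remarking only that the proof there is a local argument using nothing beyond strict magnetic convexity of the boundary. Your argument (a boundary defining function $\rho$, the implicit function theorem at transversal exit times, and near $S(\partial M)$ division of $\varphi(t,x,\xi)=\rho(\gamma_\xi(t))$ by $t$ via Hadamard's lemma, so that strict convexity, through $\psi_t(0,x_0,\xi_0)=-\tfrac12\left(\text{II}(x_0,\xi_0)-\la Y_{x_0}(\xi_0),\nu(x_0)\ra\right)<0$, restores nondegeneracy) is precisely that local argument, so it matches the cited proof and in fact substantiates the paper's remark. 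Two minor points worth tightening: the signed distance $\rho$ is smooth only in a collar of $\partial M$, not on all of $M_1$ --- harmless, since every use you make of smoothness of $\varphi$ (the implicit function theorem at a boundary-hitting time, and the mean value theorem on $[0,\mathbb{L}]$ near $S(\partial M)$) involves only geodesic segments lying in such a collar --- and identifying the implicit-function-theorem root with $\mathbb{L}$ at transversal points tacitly uses the continuity of $l,l^-$, which the paper itself takes as clear; near $S(\partial M)$ your sign analysis of $\varphi=t\psi$ already yields the identification without it.
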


This lemma was proved as stated, for simple magnetic systems, but the proof is
a local argument using only the strong magnetic convexity of the region.\\

The \emph{scattering relation} $\mathcal{S} :\partial_+SM \to \partial_-SM$ of
a magnetic system $(M, g, \Omega)$ is defined as follows:
$$\mathcal{S}(x,\xi) = (\gamma_{\xi}(l(x,\xi)),\gamma_{\xi}'(l(x,\xi)))$$
when the value $l(x,\xi)$ is finite, otherwise it is not defined. \\

\begin{figure}
\psfrag{v}{$\xi$}
     \psfrag{Sv}{$\mathcal{S}(\xi)$}
\includegraphics[scale=0.8]{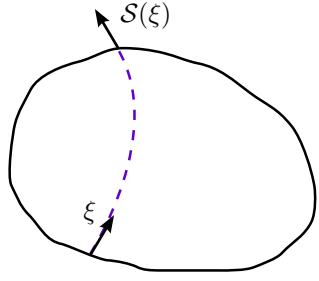}
\caption{The Scattering relation.} \label{Scattering}
\end{figure}

The \emph{restricted scattering relation} $\texttt{s} :\partial_+SM \to \partial M$ is
defined to be the postcomposition of the scattering relation with the natural
projection of $\partial_-SM$ to $\partial M$, i.e., $$\texttt{s}(x, \xi)
=\gamma_{\xi}(l(x,\xi))$$ when properly defined.\\

We are interested only in simple domains, and domains that have the same
scattering data as a simple domain, so we will assume that $l^-$ and $l$ are
finite and smooth on $\partial (SM)$. Moreover, it follows from the smoothness
of $l$ and their definitions that both $\mathcal{S}$ and $\texttt{s}$ are
smooth everywhere on $\partial_+SM$.\\

Let $\widehat{M}$ be a compact simple domain with respect to $\widehat{\Omega}$
in the interior of a manifold $(\widehat{M_1},\hat{g})$. Let $M$ be a compact
domain in the interior of a manifold $(M_1,{g})$ with $\Omega$. Related in such
way that $\hat{g}=g$ and $\widehat{\Omega}=\Omega$ on $\widehat{M_1}\setminus
\widehat{M} =M_1\setminus M$, and the (restricted) scattering relations
$\widehat{\mathcal{S}}, \mathcal{S}$ agree on $\partial M (=\partial
\widehat{M})$. To be able to compare the magnetic flows in $M$ and
$\widehat{M}$ we would like to say that $M$ is also simple, without having to
impose it as a condition. The purpose of this section is to prove the following
theorem.

\begin{thm}\label{simple implies simple}
Given $M$ and $\widehat{M}$ as above, then $M$ is also simple.
\end{thm}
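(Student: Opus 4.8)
The plan is to verify the two defining properties of a simple magnetic system for $(M,g,\Omega)$: strict magnetic convexity of $\partial M$, and the requirement that $exp^\mu_x$ be a diffeomorphism onto $M$ for every $x\in M$. The convexity is immediate. Since $g=\hat g$ and $\Omega=\widehat\Omega$ on the open set $M_1\setminus M=\widehat{M_1}\setminus\widehat M$, and both systems are smooth across the common boundary $\partial M=\partial\widehat M$, all derivatives of $g$ and of $\Omega$ agree along $\partial M$: the one-sided jets from the exterior coincide, and each tensor is globally smooth up to the boundary, so its jet there equals that one-sided limit. Hence $\text{II}$, $\nu$ and $Y$ along $\partial M$ are the same for both systems, and the strict inequality $\text{II}(x,\xi)>\la Y_x(\xi),\nu(x)\ra$, which holds because $\widehat M$ is simple, holds verbatim for $(M,g,\Omega)$. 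Thus $\partial M$ is strictly magnetically convex.

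The heart of the matter is to show that $(M,g,\Omega)$ has no conjugate points, and the key observation is that conjugacy of the endpoints of a boundary-to-boundary geodesic is detected by the restricted scattering relation. Fix $a\in\partial M$ and let $P_a:\partial_+S_aM\to\partial M$, $P_a(\xi)=\gamma_\xi(l(a,\xi))=\texttt{s}(a,\xi)$, be the restricted scattering relation with its footpoint fixed at $a$. Differentiating $P_a$ at $\xi=\eta$ in a direction $v\perp\eta$ produces the magnetic Jacobi field $J_v$ with $J_v(0)=0$ and $J_v'(0)=v$, and gives $d_\eta P_a(v)=J_v(l)+\dot l\,\gamma_\eta'(l)$; since $\la J_v',\gamma'\ra\equiv 0$, the vector $J_v'(0)$ is automatically orthogonal to $\eta$, and one sees that $d_\eta P_a$ is singular exactly when there is a nontrivial magnetic Jacobi field with $J(0)=0$ and $J(l)$ parallel to $\gamma'(l)$, i.e. exactly when $\gamma_\eta(l(a,\eta))$ is conjugate to $a$. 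Because $\texttt{s}=\widehat{\texttt{s}}$, the map $P_a$ equals its counterpart $\widehat P_a$ for $\widehat M$; and since $\widehat M$ is simple, $\widehat\gamma_\eta$ has no conjugate points, so $d_\eta\widehat P_a$ is nonsingular. Therefore $d_\eta P_a$ is nonsingular and \emph{no boundary-to-boundary geodesic of $M$ has conjugate endpoints}.

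Next I upgrade this to the absence of all conjugate points by the continuity argument of Section 3. Fix $a$ and let $\xi$ range over the connected hemisphere $\partial_+S_aM$; by Lemma \ref{lsmooth} the length $l(a,\xi)$ is smooth and, since $l(a,\xi)/\la\nu,\xi\ra$ is bounded, tends to $0$ as $\xi$ approaches the tangent directions, so near those directions the geodesics are short, have no conjugate points, and $Ind_\gamma$ is positive definite on $\Lambda_0^\perp$. If some $\gamma_{\xi_1}$ had a conjugate point, then either its endpoint is conjugate to $a$, contradicting the previous paragraph, or it has an interior conjugate point, whence Lemma \ref{Negative Index} yields a negative value of $Ind_{\gamma_{\xi_1}}$; joining $\xi_1$ to a near-tangent direction by a path in the hemisphere and using the continuity of positive definiteness, the first parameter at which it fails would be a geodesic with conjugate \emph{endpoints} and no interior conjugate point, again contradicting the previous paragraph. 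Hence $Ind_\gamma$ is positive definite on $\Lambda_0^\perp$ for every boundary-to-boundary geodesic. Restricting a field supported on a subinterval $[s_0,t_0]$ and extending it by zero shows the index form there is positive definite as well, so no $\gamma(t_0)$ is conjugate to any $\gamma(s_0)$; since strict convexity lets every geodesic segment extend to a boundary-to-boundary one, $(M,g,\Omega)$ has no conjugate points and each $d_v\,exp^\mu_x$ is nonsingular.

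It remains to promote the local diffeomorphism $exp^\mu_x$ to a global one, and this is the step I expect to be the main obstacle. Absence of conjugate points makes $exp^\mu_x$ a local diffeomorphism on $(exp^\mu_x)^{-1}(M)$, which is compact because $l$ and $l^-$ are finite and bounded; thus $exp^\mu_x$ is a proper local diffeomorphism, hence a covering onto its image, and an open-and-closed argument in the connected $M$ gives surjectivity. The delicate point is injectivity, equivalently triviality of this covering. I would obtain it by controlling the topology of $M$: since $\widehat M$ is simple it is diffeomorphic to a ball, so $\partial M=\partial\widehat M$ is a sphere, and the matching scattering data together with the conjugate-point-free foliation of $M$ by magnetic geodesics should force $M$ to carry the same topology; alternatively, passing to the universal cover, no conjugate points together with strict convexity make the magnetic (Randers) action functional have a unique minimizing geodesic between any two points, giving injectivity upstairs and hence a single-sheeted cover. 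Extracting the global topology of $M$ from boundary data is the ingredient that requires care; once it is in hand, $exp^\mu_x$ is a diffeomorphism onto $M$ for every $x$, and $M$ is simple.
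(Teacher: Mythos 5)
Your treatment of strict convexity and of conjugate points is essentially the paper's own argument: the identity $d_\eta P_a(v)=J_v(l)+\dot l\,\gamma_\eta'(l)$, combined with transversality of $\gamma_\eta'(l)$ to $\partial M$ (from strict convexity), is exactly how the paper shows that the restricted scattering relation detects conjugate pairs on the boundary, and your sweep from near-tangent directions using continuity of positive definiteness of the index form is the paper's Lemma \ref{conj on boundary}. Up to the conclusion that $exp^\mu_x$ is a local diffeomorphism, the proposal is sound.

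The genuine gap is the one you flag yourself: injectivity, i.e.\ triviality of the covering, and neither of your two suggestions is a proof. ``The matching scattering data \dots should force $M$ to carry the same topology'' is precisely what must be shown, and it is not a formality: the paper's own example of a round sphere and a round $\R P^2$, each with a disk removed, gives two domains with the same scattering data and different topology (neither is simple, but it shows topology does not follow from scattering data alone). The Randers/action-functional alternative also does not work for magnetic systems: unit-speed magnetic geodesics are not critical points of a length functional unless $\Omega$ is exact (with a suitably small primitive), the flow is non-reversible and cannot be rescaled, and there is no Cartan--Hadamard-type statement deducing uniqueness of magnetic geodesics in a cover from the absence of conjugate points. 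The paper closes the argument with no topological input at all, by using the scattering data a second time: for $x\in\partial M$, $exp^\mu_x$ is a covering map onto $M$, and the point $x$ has exactly one preimage, because a magnetic geodesic in $M$ from $x$ back to $x$ would either return transversally---forcing $\texttt{s}(x,\xi)=x$, which is impossible since $\texttt{s}=\widehat{\texttt{s}}$ and a simple domain has no geodesic loops---or return tangentially, contradicting strict magnetic convexity; hence the covering has degree one. Injectivity at all boundary points then implies magnetic geodesics in $M$ have no self-intersections, which gives a single preimage, hence degree one again, for interior $x$. Note also that your properness claim (that $(exp^\mu_x)^{-1}(M)$ is compact ``because $l$ and $l^-$ are finite and bounded'') silently assumes there are no trapped geodesics through interior points: finiteness of $l$ is only assumed on $\partial(SM)$, and the paper needs a separate argument---a variation with fixed initial point, along which meeting $\partial M$ is both an open and a closed condition---to rule out trapping before any covering or degree argument can be run.
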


To prove that the magnetic exponential is a diffeomorphism we need to show
that it has no conjugate points. For this we need the following lemma.\\

\begin{lemma}\label{conj on boundary}
If there are conjugate points in $M$, then there is a pair of points in
$\partial M$ conjugate to each other.
\end{lemma}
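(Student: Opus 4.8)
The plan is to prove the contrapositive in spirit: assuming a conjugate pair exists somewhere in the interior of $M$ along some magnetic geodesic, I want to produce a conjugate pair lying on $\partial M$. The natural mechanism is the continuity of the index form developed in the previous section. Recall that for a continuous family of magnetic geodesics $\gamma_s$, positive-definiteness of $Ind_{\gamma_s}$ on $\Lambda_0^\perp$ varies continuously in $s$, and the first parameter $s$ at which positive-definiteness fails is exactly where $\gamma_s$ acquires conjugate endpoints with no interior conjugate point (this is the statement compiled from Corollary~\ref{cor to Index Lemma}, Lemma~\ref{Negative Index}, Lemma~\ref{Index at conjugate point}, and the continuity remark). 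So my strategy is to set up a one-parameter family of magnetic geodesic segments that interpolates between a short segment (known to be conjugate-point-free, since short geodesics near a point have no conjugate points) and a segment realizing the hypothesized interior conjugacy, and then track where positive-definiteness first breaks down.

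First I would fix the magnetic geodesic $\gamma$ along which there is an interior conjugate pair, say $\gamma(a)$ conjugate to $\gamma(b)$ with the whole segment contained in $M$. By the characterization in the Proposition, this means $Ind_\gamma$ restricted to $\Lambda_0^\perp$ over $[a,b]$ fails to be positive definite. Next I would extend $\gamma$ maximally to a segment $\gamma:[l^-,l]\to M$ with both endpoints on $\partial M$; this uses that $M$ sits inside the larger manifold $M_1$ where $g,\Omega$ are extended, together with the finiteness of $l^-,l$ guaranteed by strict magnetic convexity (and Lemma~\ref{lsmooth}). The idea then is to slide the right endpoint: consider the family of subsegments $\gamma|_{[l^-,\tau]}$ as $\tau$ increases from a value just past $l^-$ up to $l$. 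For $\tau$ near $l^-$ the segment is short and $Ind$ is positive definite; by the time $\tau$ reaches the conjugate parameter $b$ it is not. By the continuity-of-positive-definiteness principle, there is a first $\tau_0$ where $Ind_{\gamma|_{[l^-,\tau_0]}}$ has nontrivial kernel, and at that first value $\gamma(l^-)$ and $\gamma(\tau_0)$ are conjugate with no interior conjugate points between them. If $\tau_0 = l$ we are done, since both endpoints lie on $\partial M$.

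The obstacle is the case $\tau_0 < l$, where the conjugate pair $\gamma(l^-),\gamma(\tau_0)$ has one endpoint interior to $M$. To push the interior endpoint to the boundary I would run a second sliding argument, now varying the \emph{left} endpoint (or equivalently passing to the associated reversed magnetic system $(M,g,-\Omega)$ of the preceding lemma, so that I can treat the left endpoint as a right endpoint and reuse the same monotonicity). Having a conjugate pair $\gamma(l^-),\gamma(\tau_0)$ with $\tau_0$ interior says the index form on $[l^-,\tau_0]$ is degenerate but not negative; extending the segment slightly past $\tau_0$ makes it negative (by Lemma~\ref{Negative Index}, since now there is an interior conjugate point), so I can again invoke continuity while sweeping the left endpoint from $l^-$ back toward the boundary the \emph{other} direction, or sweeping along the family of full chords with one endpoint pinned. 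The key point to make rigorous is that the set of boundary-to-boundary chords, parametrized by $\partial_+ SM$, is compact and the conjugacy/index data vary continuously on it, so a first-failure argument on this compact family must locate a degeneration occurring with \emph{both} endpoints on $\partial M$.

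The hard part will be organizing the sweeping argument so that the "first failure'' genuinely lands on a boundary-to-boundary geodesic rather than stalling at an interior endpoint, and ensuring the family of geodesic segments used stays inside $M$ and depends continuously on the parameter right up to the boundary (where $l^-,l$ are only guaranteed smooth via Lemma~\ref{lsmooth}, and where the endpoints approach $\partial M$ possibly tangentially). I expect the cleanest formulation is to consider the compact parameter space $\partial_+ SM$ of all maximal chords, define on it the (lower-semicontinuous) "first conjugate time'' function, and argue that an interior conjugate pair forces this function to take a value at most the chord length somewhere, while strict magnetic convexity prevents conjugacies from escaping through the boundary — so the infimum configuration is realized by a chord whose conjugate point sits exactly at the far boundary endpoint, giving the desired pair of mutually conjugate boundary points. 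Making the boundary behavior precise, using strict convexity to rule out degenerate limits, is where the real care is needed.
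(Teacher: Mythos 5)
Your first stage --- extending the geodesic carrying the conjugate pair to a full chord and sliding the terminal parameter until positive definiteness of the index form first fails, producing a point conjugate to the boundary point $\gamma(l^-)$ --- is exactly the paper's own reduction step (done there with the family $\gamma_\xi|_{[0,sT]}$), and it is fine. The genuine gap is in your second stage, which is where the entire difficulty of the lemma lives, and you acknowledge as much. Of the mechanisms you float, none is carried out, and the two you lean on would fail. Sweeping the left endpoint cannot work as described: after stage one the left endpoint already lies on $\partial M$, so it cannot be pushed ``the other direction,'' and sliding it inward along the chord only produces, at the first degeneration coming from the short (positive definite) side, a Jacobi field vanishing at an \emph{interior} point $\gamma(\sigma^*)$ and parallel at the boundary point $\gamma(l)$ --- i.e.\ the same boundary--interior configuration with the roles of the two endpoints swapped. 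The process alternates and never terminates. Your compactness/infimum formulation also does not do the job: finding a chord whose first conjugate time equals its exit time is an intermediate-value (connectedness plus continuity) statement, not an extremal one. Minimizing a lower semicontinuous ``first conjugate time'' over the compact set of chords possessing conjugate points yields a minimizer, but nothing forces the conjugate point of that minimizer to sit at the far boundary endpoint.

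What is missing is the move the paper actually makes: pin the boundary point $x=\gamma(l^-)$ and rotate the \emph{direction}, taking a curve $\tau:[0,1]\to S_xM$ from a tangent direction in $S_x\partial M$ to the direction $\xi$ of the bad chord, and considering the family of full chords $\gamma_s(t)=exp^\mu_x(t\tau(s))$, $t\in[0,l(x,\tau(s))]$. The anchor of the first-failure argument is that for $s$ near $0$ these chords are \emph{short}: by strict magnetic convexity and the bound $l(x,\eta)\leq C\la \nu(x),\eta\ra$ from Lemma~\ref{lsmooth}, near-tangent chords stay inside a neighborhood where $exp^\mu_x$ is a diffeomorphism, hence carry no conjugate points and have positive definite index form; at $s=1$ the index form has a negative direction by Lemma~\ref{Negative Index}, since $\gamma(\tau_0)$ is conjugate to $x$ strictly before the exit time. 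Because \emph{every} member of this family is by construction a boundary-to-boundary chord, the first $s_0$ at which positive definiteness fails (the continuity principle at the end of the index-form section) automatically produces conjugate \emph{endpoints}, namely $x$ and the exit point $\texttt{s}(\tau(s_0))$, both on $\partial M$. Your passing phrase ``sweeping along the family of full chords with one endpoint pinned'' points at this, but without the rotation parametrization and the short-chord anchor there is no place for the sweep to start, and the argument as written does not close.
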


Suppose there is a point in the interior conjugate to $x\in \partial M$ along a
geodesic $\gamma_\xi$. Let $\tau:[0,1]\to S_xM$ be a curve joining $S_x\partial
M$ to $\xi$, and consider the family of magnetic geodesics
$\gamma_s=\exp_x^\mu(t\tau(s))$. These geodesics exit $M$ at time
$l(x,\gamma_s')$, that by the simplicity of $\widehat{M}$ is a continuous
function of $s$. Close enough to $x$ the magnetic exponential is a
diffeomorphism, and by lemma \ref{lsmooth} there is a $C>0$ such that
${l(x,\eta)}\leq C{\la\nu(x),\eta\ra}$ for all $\eta \in  S_xM$. This implies
that for $s$ small enough, the magnetic geodesic from $x$ to
$\texttt{s}(\tau(s))$ is short, and stays inside a neighborhood where the
magnetic exponential is a diffeomorphism. Therefore, it has no conjugate
points, and the index form is positive definite close to $x$. On the other
hand, there is a perpendicular vector field along $\gamma_\xi$ for which the
index form is negative. Then $Ind_{\gamma_s}$ is positive definite for $s=0$
and not for $s=1$. Let $s_0$ be the smallest $s$ for which $Ind_{\gamma_s}$ has
non trivial kernel. Then, by the results on the previous section,
$\texttt{s}(\tau(s_0))$ is conjugate to $x$ along the magnetic geodesic
$\gamma_s$ that joins them.

If there are points conjugate to each other along a magnetic geodesic
$\gamma_\xi$, and both lie in the interior of $M$, there must be a point
conjugate to $\gamma_\xi(0)$ along this magnetic geodesic. Therefore reducing
the problem to the case above. This can be proved by a similar argument using
the family of geodesics $\left.\gamma_{\xi}\right|_{[0,sT]}$.

\begin{proof}[Proof of Theorem \ref{simple implies simple}]

It is easy to see from the definition that the domain $M$ has to be strictly
magnetic convex, since the metrics and magnetic flows agree outside $M$.\\

To prove that the magnetic exponential map is a diffeomorphism form
$(exp^\mu_x)^{-1}(M)$ to $M$ we need to show that it has no conjugate points,
i.e. there are no points in $M$ that are conjugate to each other along a
magnetic geodesic. For this purpose assume such points exist, then by Lemma
\ref{conj on boundary} there are points $x, y \in
\partial M$ conjugate to each other along a magnetic geodesic $\gamma_\xi$,
where $\gamma_\xi(0)=x$ and $\gamma_\xi(t_0)=y $ for some $t_0 > 0$. \\

Let $J$ be a magnetic Jacobi field along $\gamma_{\xi}$ that vanishes at $0$,
and $f(s,t)$ a variation through magnetic geodesics with $f(0,t)=\gamma_\xi$
and $J$ as a variational field. We can use $f(s,t)=\gamma_s(t)=exp^\mu_x(t
\xi(s))$ where $\xi:(-\epsilon,\epsilon)\to S_xM$ is a curve with $\xi(0)=\xi$,
$\xi'(0)=J'(0)$. $f$ is well defined in $M$ for $(s,t)\in
(-\epsilon,\epsilon)\times [0,T_s]$ where $T_s=l(x,\xi(s))$. Consider
$c(s)=f(s,T_s)\in \partial M$ the curve of the exit points in $\partial M$.
Then
$$\frac{dc}{ds}(0)=\frac{df}{ds}(0,T_s)+\frac{df}{dt}\frac{dT_s}{ds}(0,T_s)$$
$$\hspace*{.5cm}= J(T_s)+\frac{dT_s}{ds}(0,T_s)\gamma_\xi'.$$

If $\gamma_\xi(l(x,\xi))$ is conjugate to $\gamma_\xi(0)$ along $\gamma_\xi$,
there is a Jacobi field $J$ that is $0$ at $t=0$ and parallel to $\gamma_\xi$
at $T_s$, then $\frac{dc}{ds}(0)$ is parallel to $\gamma_\xi'$. On the other
hand, if $\frac{dc}{ds}(0)$ is parallel to $\gamma_\xi'$ for any Jacobi field
with $J(0)=0$ then $J(T_s)$ is parallel to $\gamma_\xi'$. Therefore
$\gamma_\xi(T_s)$ is conjugate to $\gamma_\xi(0)$ along $\gamma_\xi$.

Note that we can write $c(s)=\texttt{s}(\xi(s))$, that depends only on the
scattering data, so the scattering relation detects conjugate points in the
boundary. Since there are no conjugate points in the boundary of $\widehat{M}$,
there can be none in $M$. Therefore the magnetic exponential is a local
diffeomorphism.

We will now see that $exp^\mu_x$ is a global diffeomorphism from $(exp^\mu_x)^{-1}(M)$ to $M$. To see that it is surjective let $x\in\partial M$, and $y$ any point in $M$. Let $c:[0,1]\to M$ be a path from $x$ to $y$, and consider the set $A\subset[0,1]$ of points such that $c(s)$ is in the image of $exp^\mu_x$. This set is open, since $exp^\mu_x$ is a local diffeomorphism. To see that it is closed, choose a sequence $s_n\in A$ converging to $s_0$. Then $c(s_n)= exp^\mu_x(t(s_n)\xi(s_n))$, and there is a subsequence such that $t(s_n)$ and $\xi(s_n)$ converge to $t_0$ and $\xi_0$ respectively. If $t_0\xi_0\notin (exp^\mu_x)^{-1}(M)$, there must be a first $t_1< t_0$ such that $exp^\mu_x(t_1\xi_0)\in \partial M$. Then $exp^\mu_x(t\xi_0)$ must be tangent to $\partial M$ and inside $M$ for $t<t_1$, which contradicts the magnetic convexity of $M$. Then, $A$ is both open and closed, therefore $A=[0,1]$ and $y$ is in the image of $exp^\mu_x$.

To see that $exp^\mu_x$ is injective for $x\in\partial M$, note that it is a covering map. The point $x$ has only one preimage, since by the simplicity of $\widehat{M}$ there are no magnetic geodesics form $x$ to $x$. Therefore $exp^\mu_x$ is a covering map of degree $1$.

To prove this for $x\notin\partial M$, we need to see that there are no trapped magnetic geodesics, that is, that there are no magnetic geodesics that stay inside $M$ for an infinite time. Note that, since any magnetic geodesic that enters the region at $\xi$ has to exit at $\texttt{s}(\xi)$, it is enough to see that all geodesics enter the region at a finite time.  Let $\gamma$ be a magnetic geodesic. We know that we can reach the point $\gamma(0)$ from the boundary, so there is a variation through magnetic geodesics $\gamma_s(t)$ with $\gamma_0=\gamma$, $\gamma_s(0)=\gamma(0)$ for all $s\in [0,1]$, and $\gamma_1(t_1)\in \partial M$ for some $t_1<0$. If $\gamma_{s_0}$ intersects $\partial M$, by the magnetic convexity of $M$ it has to be a transverse intersection, therefore intersecting $\partial M$ is an open condition on $[0,1]$. It is also a closed condition, by continuity of the geodesic flow and compactness of $\partial M$. Therefore, since $\gamma_1$ intersects $\partial M$, so does $\gamma_{s}$ for all $s$, and $\gamma$ is not trapped.

Now we see that $exp^\mu_x$ is a global diffeomorphism from $(exp^\mu_x)^{-1}(M)$ to $M$ for $x\notin\partial M$. Since $exp^\mu_x$ is injective for $x\in\partial M$, and all geodesics come from some point $x$ in $\partial M$, magnetic geodesics in $M$ have no self intersections. In particular,any $x\in M$ has only one preimage under $exp^\mu_x$.  We can then follow the same argument as for $x\in\partial M$ to show that $exp^\mu_x$ is a global diffeomorphism from $(exp^\mu_x)^{-1}(M)$ to $M$, for all $x \in M$.
\end{proof}


\section{Rigidity for Surfaces}\label{Rigidity for Surfaces}

Consider a magnetic field on a surface $\widehat{M}$ all of
whose orbits are closed, and consider a magnetically simple region $R$ on it.
We want to prove that there is no way of changing the metric and magnetic field
in this region in such a way that all orbits are still closed.

In the previous section we saw that such a region is magnetically rigid,
therefore it can't be changed on the region preserving the scattering data.
Here we will look at the general behavior of such a magnetic flow to ensure
that there are no other metrics with all its orbits closed. We want to rule out
the case where a magnetic geodesic that passes through the region, after coming
out at a different spot and following the corresponding orbit, goes back into
the region and exits at the exit point and direction of the original first
magnetic geodesic, therefore forming a closed orbit out of two (or more)
segments of the original orbits, like in figure \ref{double orbit}.

To show this, assume that we have such a magnetic field. Assume, moreover, that
the region $R$ is such that every magnetic geodesic passes through $R$ at most once. This condition restricts both the size of $R$ and the flow, since in a flow where the orbits have many self-intersections such a region might not exist. On the other hand, if all orbits are simple we can see by compactness that there are small regions with this property. \\

\begin{proof}[Proof of Theorem \ref{closed orbits implies scattering}]

Consider the unit tangent bundle $SM$, and the magnetic geodesic vector field
$G$ i.e. the vector field that generates the magnetic flow on the unit tangent
bundle. For the sake of simplicity of the exposition we will assume first that
$SM$ is oriented. Then ${SM}$ is a compact orientable $3$-dimensional manifold,
and ${G}$ is a smooth vector field that foliates ${SM}$ by circles. By a
theorem of Epstein \cite{Ep}, this foliation is $C^\infty$ diffeomorphic to a
Seifert fibration. In particular, any orbit has a neighborhood diffeomorphic to
a standard fibered torus.

Note that to each orbit on ${SM}$ we can uniquely associate a magnetic
geodesic, by projecting the orbit back to $M$. We will use this correspondence
freely. As a Seifert fibration, the  base $B$ or space of orbits of ${SM}$ is a
$2$-dimensional orbifold.

Let ${SR}$ be the subset of ${SM}$ that corresponds to the region $R$, and
$S\partial{R}$ the subset of $SM$ corresponding to vectors tangent to the
boundary of $R$. The orbit of a point in $S\partial{R}$ corresponds to a
magnetic geodesic that is tangent to $R$, and since $R$ is strictly
magnetically convex, it is tangent only at one point. This magnetic geodesic
corresponds exactly to one in $\widehat{M}$, and therefore stays away from $R$
thereafter. This means that each orbit contains at most one point of
$S\partial{R}$, so the set of orbits passing through it forms a
$1$-dimensional submanifold on $B$, we will denote it by $R_0$. \\

Let $m : B\to \mathbb{N}$ be a function that counts the number of times the
orbit pases through ${SR}$ in a common period. For regular orbits this is the
number of times it passes through ${SR}$.  If the orbit is singular it has a neighborhood
diffeomorphic to an $(a,b)$ torus, that is a torus obtained by gluing two
faces of a cylinder with a rotation by an angle of $2\pi b/a$. In this case the
common period is $a$ times the period of the singular orbit. Therefore, $m$
will be $a$ times the number of times the orbit passes through ${SR}$. Since
the other orbits in the neighborhood will be completed when the singular orbit
is traveled $a$ times, $m$ will be, in general, continuous at such points. In
fact, if a magnetic geodesic  intersects $\partial R$ transversally (or not at
all), we can chose a neighborhood small enough that all intersections are
transverse, and therefore $m$ will be constant. The only discontinuities occur
when a magnetic geodesic is tangent to $\partial R$, that is exactly at the
orbits in $R_0$.

We will now look at these discontinuities. If a magnetic geodesic $\gamma$ corresponds
to an orbit $b$ in $R_0$, it is tangent to $R$ at a point $\gamma(0)$. It agrees with a
magnetic geodesic in $\widehat{M}$, so it never reaches $R$ again and $m(b)=1$.
By the magnetic convexity of $M$, there is a $\delta$ small enough that each magnetic geodesic in the ball $B_\delta(\gamma(0))$ goes through $R$ at most once. Moreover, since the magnetic geodesic is compact, we can find an $\varepsilon$ neighborhood $N_\varepsilon(\gamma)$ such that it only intersects $R$ close to $\gamma(0)$, i.e. $N_\varepsilon(\gamma)\cap R =B_\delta(\gamma(0))\cap R$.

If the  orbit $b$ corresponding to $\gamma$ is regular, orbits in a small enough neighborhood will correspond to nearby magnetic geodesics, completely contained in $N_\varepsilon(\gamma)$. These magnetic geodesics will then intersect $R$ only inside  $B_\delta(\gamma(0))$, and therefore at most once. Thus, these orbits will have $m$ equal to  $0$ or $1$.\\

We have that $B$ is a $2$-dimensional orbifold, and $R_0$ is a continuous curve
on it. The function $m$ is constant on each
connected component of $B\setminus R_0$, and takes values $0$ or $1$. On
$R_0$, the function $m=1$, except maybe at isolated
singular orbits. Since on
regular orbits $m=1$, we
can say that magnetic geodesics go through $R$ at most once, except maybe at a
finite number of singular ones. Any singular magnetic geodesics that is tangent must go
through $R$ only once. If a singular magnetic geodesics cuts $\partial R$ transversely, we
know that $m=1$. But the corresponding orbit pases through ${SR}$ exactly $m/a$
times, so $a=m=1$ and the geodesic is not singular.\\

In the case where $SM$ is non orientable, consider instead
its orientable double cover $\widetilde{SM}$, and the associated vector field
$\widetilde{G}$. Then $\widetilde{SM}$ is a compact orientable $3$-dimensional
manifold, and $\widetilde{G}$ is a smooth vector field that foliates $\widetilde{SM}$
by circles. We can follow the same arguments with a few modifications.

The correspondence between orbit on $\widetilde{SM}$ and magnetic geodesics is not
a $1-1$ correspondence, a magnetic geodesic lifts either to an orbit that
covers it twice, or two disjoint orbits. Nonetheless, will use this
correspondence freely, keeping in mind this possible duplicity.

Let $\widetilde{SR}$ be the subset of $\widetilde{SM}$ that corresponds to the region
$R$, and $\widetilde{S\partial R}$ the subset of $\widetilde{SM}$ corresponding to
vectors tangent to the boundary of $R$. Let $\widetilde B$ be space of orbits of
$\widetilde{SM}$ and $\widetilde{R_0}$ the set of orbits passing through
$\widetilde{S\partial R}$. The counting function $m : \widetilde{B}\to \mathbb{N}$ can
then take value $2$, since an orbit that covers a magnetic geodesic twice will
pas through $\widetilde{SR}$ twice. In fact, when $m(b)\neq 0$, it will be $1$ if
the magnetic geodesic corresponds to two disjoint orbits, and $2$ when it
corresponds to an orbit that covers it twice.

Since on regular orbits $m=2$ only on orbits that cover a magnetic geodesic twice, we
can say that magnetic geodesics go through $R$ at most once, except maybe for a
finite number of singular ones. Any singular one that is tangent must go
through $R$ only once, by assumption. If a singular orbit cuts $\partial R$ transversely, we
know that $m$ is at most $2$. But the orbit pases through $\widetilde{SR}$ $m/a$
times, so if it is singular $a=m=2$ and the geodesic goes through $R$ only
once.\\

Every magnetic geodesic goes through $R$ at most once, and outside $R$ they
agree with the magnetic geodesics from $\widehat{M}$. For the magnetic geodesics to
close, they have to exit $R$ in the same place and direction, therefore
preserving the scattering data.
\end{proof}

If the region $R$ is simple, we can use this result together with theorem
\ref{simple implies simple} to get rigidity. For surfaces of constant curvature is easy to see that
any circular disk that is strictly smaller than one of the orbit circles is a
simple domain. Corollary \ref{rigidity for constant curvature} can be stated in a more precise way as the following theorem.

\begin{thm}
Let $M$ be a surface of constant curvature $K$, and $k>0$ big enough that all circles of curvature $k$ are simple.
Let $r_k$ be the radius of a circle of curvature $k$, and $0<r< r_k$ such that $r+r_k$ is smaller than the injectivity radius of $M$. Let $R$ be a compact
region contained in the interior of a disk of radius $r$. Then the region
$R$ can't be perturbed while keeping all the circles of curvature $k$ closed.
\end{thm}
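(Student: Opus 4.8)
The plan is to combine the two main theorems of the paper with the elementary geometry of constant-curvature surfaces. The final theorem is essentially a concrete instance of Corollary \ref{rigidity for constant curvature}, so the work consists of verifying that the hypotheses of Theorem \ref{simple implies simple} and Theorem \ref{closed orbits implies scattering} are met by the configuration described, and then chaining the two results together.

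First I would set up the magnetic picture. On a surface of constant curvature $K$, a constant magnetic field $\Omega = k\,dA$ produces magnetic geodesics that are exactly the curves of constant geodesic curvature $k$, i.e. the circles of curvature $k$; by hypothesis $k$ is large enough that these are all simple closed curves of some fixed radius $r_k$. Thus the unperturbed surface $\widehat{M}$ (equipped with this constant field) has \emph{all} its magnetic geodesics closed, which is the standing hypothesis of Theorem \ref{closed orbits implies scattering}. Next I would check that the region $R$, being contained in the interior of a disk of radius $r$, is strictly magnetically convex: since a disk of radius $r < r_k$ is strictly smaller than an orbit circle, its boundary circles curve more sharply than the magnetic geodesics, which gives the strict inequality $\text{II}(x,\xi) > \la Y_x(\xi),\nu(x)\ra$. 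The condition $r + r_k <$ the injectivity radius is precisely what guarantees that every magnetic geodesic (a circle of radius $r_k$) meets the disk of radius $r$ at most once: two intersections would force the circle to wrap around in a way that violates the injectivity bound. This verifies the ``passes through $R$ at most once'' hypothesis.

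With these checks in place, the argument is a contradiction. Suppose $R$ could be perturbed — replacing $g,\Omega$ inside $R$ by some $\hat g, \hat\Omega$ agreeing with the originals outside $R$ — while keeping all circles of curvature $k$ closed, i.e. keeping all magnetic geodesics of the perturbed system closed. I would apply Theorem \ref{closed orbits implies scattering} (with the roles of $M$ and $\widehat{M}$ as there) to conclude that the perturbed and unperturbed systems have the \emph{same scattering data} on $\partial R$. Then, since the disk of radius $r$ containing $R$ is a simple domain for the constant-curvature system (any circular disk strictly smaller than an orbit circle is simple, as noted in the paragraph preceding the statement), Theorem \ref{simple implies simple} forces the perturbed region to be simple as well, and the magnetic boundary rigidity results of \cite{DPSU} — which apply to all $2$-dimensional simple metrics — then imply that the metric and magnetic field inside $R$ must coincide with the original. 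Hence the perturbation is trivial, contradicting the assumption that $R$ was genuinely changed.

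The main obstacle I anticipate is the convexity and single-crossing verification, rather than the abstract rigidity machinery, which is entirely imported from the earlier sections. Specifically, one must be careful that strict magnetic convexity holds for \emph{the perturbed} field as well, not just the background one — but since the metric and field agree outside $R$ and $\partial R$ lies in the region where nothing has changed, convexity of $R$ is a statement about data that is unaltered by the perturbation, so it transfers automatically. The only genuinely geometric input is the quantitative estimate relating $r$, $r_k$, and the injectivity radius to the at-most-once-crossing property; here I would argue that a magnetic geodesic is a metric circle of radius $r_k$, that its intersection with the disk $D_r$ is a connected arc whenever $r + r_k$ is below the injectivity radius (so that the two circles bound an embedded lens region), and hence it enters and leaves $R$ exactly once. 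Once that lemma is secured, the theorem follows by directly invoking the two cited results in sequence.
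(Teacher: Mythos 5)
Your strategy is the same as the paper's: check the convexity and single-crossing hypotheses, apply Theorem \ref{closed orbits implies scattering} to get equality of scattering data, apply Theorem \ref{simple implies simple} to transfer simplicity to the perturbed domain, and conclude with Theorem 7.1 of \cite{DPSU}. However, there are two genuine gaps. The first concerns what region the theorems are applied to. You assert that $R$ itself is strictly magnetically convex because it lies inside a disk of radius $r<r_k$; this is false in general, since $R$ is an arbitrary compact region --- it need not be convex, or connected, and its boundary need not even be smooth, so containment in a convex disk gives it no convexity whatsoever. The paper avoids this by never running the machinery on $R$: a perturbation $\widetilde{R}$ of $R$ is regarded as a perturbation $\widetilde{D}$ of the disk $D$ of radius $r$, and both Theorem \ref{closed orbits implies scattering} and Theorem \ref{simple implies simple} are applied to the pair $D$, $\widetilde{D}$. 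Your text drifts toward the disk when you invoke simplicity, but your verification of the hypotheses of Theorem \ref{closed orbits implies scattering} (strict convexity, at-most-one crossing) is stated for $R$, and your application of that theorem ``on $\partial R$'' is unjustified as written; every step must be phrased for $D$.

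The second gap is compactness. Theorem \ref{closed orbits implies scattering} is stated, and proved --- via Epstein's theorem on periodic flows, which requires a compact $3$-manifold --- only for \emph{compact} surfaces, whereas a surface of constant curvature $K$ may be non-compact (the Euclidean or hyperbolic plane, for instance). Your argument invokes that theorem directly for $M$ and its perturbation, so the key step fails in exactly those cases. The paper closes this by replacing a non-compact $M$ with a compact quotient of sufficiently large injectivity radius containing all circles of curvature $k$ that meet $D$ (these all lie in a fixed bounded disk), and running the whole argument there. Aside from these two points --- the substitution of $D$ for $R$ and the reduction to the compact case --- your chain of reasoning, including the observation that convexity is read from data unchanged by the perturbation and the lens-region argument for the single-crossing property, matches the paper's proof.
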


Consider $M$ with the constant magnetic field that has circles of curvature $k$
as magnetic geodesics. If $R$ is contained in a disk $D$ of radius $r$ we
can consider any perturbation $\widetilde{R}$ of $R$ as a perturbation $\widetilde{D}$
of $D$. Since $r<r_k$ the disk $D$ is simple. Also, since $r+r_k$ is smaller than the injectivity radius of $M$, any circle of curvature $k$ will go through $D$ at most once.
We can then use theorem \ref{closed orbits implies
scattering} to show that $D$ and $\widetilde{D}$ have the same scattering data.
Since $D$ is simple, and they have the same scattering data, by theorem \ref{simple implies simple}
$\widetilde{D}$ is also simple. But in  \cite[Theorem $7.1$]{DPSU} N. Dairbekov, P.
Paternain, P. Stefanov and G. Uhlmann proved that two $2$-dimensional simple
magnetic systems with the same scattering data are gauge equivalent. \\

If $M$ is not compact, consider instead of $M$ a compact quotient that contains
all the magnetic geodesics that pass through $D$. This can be achieved since
all this magnetic geodesics are inside a disk of radius $4r$, where $r$ is the
radius of a circle of curvature $k$.

\end{document}